\theoremstyle{plain}
\newtheorem{theorem}{Theorem}[section]
\newtheorem{lemma}{Lemma}[section]
\newtheorem{proposition}{Proposition}[section]
\newtheorem{corollary}{Corollary}[section]
\newtheorem{example}{Example}[section]
\numberwithin{equation}{section}
\theoremstyle{remark}
\newtheorem{remark}{Remark}[section]
 \numberwithin{equation}{section}
\newtheorem*{Theorem A}{{\bf Theorem A}}
\newtheorem*{Theorem B}{{\bf Theorem B}}
\newtheorem*{Theorem C}{Theorem C}
 \numberwithin{equation}{section}
\def\<{\left < }
\def\>{\right >}
\def\({\left ( }
\def\){\right )}
\def\r{\eqref }
\begin{document}

\title[Generalized $1$-harmonic Equation and The Inverse Mean Curvature Flow] {On A Generalized $1$-harmonic Equation and The Inverse Mean Curvature Flow}

\author[Y.I. Lee,  A.N. Wang, and S.W. Wei]{Yng-Ing Lee, Ai-Nung Wang, and Shihshu Walter Wei$^*$}
\address{Department of Mathematics and Taida Institute for Mathematical Sciences\\
National Taiwan University, Taipei, Taiwan \\
National Center for Theoretical Sciences, Taipei Office}
\email{yilee@math.ntu.edu.tw}
\address{Department of Mathematics\\
National Taiwan University\\ Taiwan} \email{wang@math.ntu.edu.tw}
\address{Department of Mathematics\\
University of Oklahoma\\ Norman, Oklahoma 73019-0315\\
U.S.A.}
\email{wwei@ou.edu}
\thanks{$^*$ Wishes to thank Taida Institute for Mathematical Sciences
and the Department of Mathematics at National Taiwan University for their kind invitation, support and hospitality. This research was partially supported by NSF Award No DMS-0508661, the OU Presidential International Travel Fellowship, and the OU
Faculty Enrichment Grant.\\ }

\begin{abstract} We {\it introduce} and study generalized $1$-harmonic equations \r{1.1}. Using some ideas and techniques in studying $1$-harmonic functions from \cite
{W1} (2007), and in studying nonhomogeneous $1$-harmonic functions on a cocompact set from 
[W2, (9.1)] (2008), we find an {\it analytic} quantity $w$ in the generalized
$1$-harmonic equations \r{1.1} on a domain in a Riemannian
$n$-manifold that affects the behavior of weak solutions of
\r{1.1},  and establish its link with the {\it geometry} of the
domain. We obtain, as applications, some gradient bounds and
nonexistence results for the inverse mean curvature flow,
Liouville theorems for $p$-subharmonic functions of constant
$p$-tension field, $p \ge n\, ,$ and nonexistence results for solutions of the initial value problem of inverse mean curvature flow.
\end{abstract}

\keywords{generalized $1$-harmonic equation; doubling property; Cheeger constant; $p$-subharmonic functions; the inverse mean curvature flow}

 \subjclass[2000]{Primary: 53C40; Secondary  53C42}
\date{}
\maketitle
\section{Introduction}
Some ideas and techniques in studying $1$-harmonic functions from
\cite {W1}, and in studying nonhomogeneous $1$-harmonic functions on a cocompact set from
[W2, (9.1)], can be carried over to a more general setting, e.g. to a
large class of the following partial differential equations:
\begin{equation}\label{1.1} \operatorname{div} \bigg(\frac {\nabla f} { |\nabla f|}\bigg)= A(x,
f, \nabla f)\, \ \ on \ \ \Omega,\end{equation} where $A(x,
f(x), \nabla f(x))$ is a
continuous real-valued function of $x \in \Omega\, ,$ or more generally $A(x,
f(x), \nabla f(x))$ is a nonnegative-valued or nonpositive-valued function,
and $\Omega$ is a domain in a complete Riemannian $n$-manifold
$M$. These equations, in particular, include the ones for
{\it $1$-harmonic functions} when $A(x, f, \nabla f)\equiv 0\, ,$
{\it functions of constant $1$-tension field} when $A(x, f, \nabla
f)\equiv \operatorname{const}\, ,$ {\it nonhomogeneous $1$-harmonic functions} when $A(x, f, \nabla
f) = A(x)\, ,$ {\it almost $1$-harmonic functions} when $A(x, f, \nabla
f) = A(x, f)\, ,$ {\it the mean curvature flow} when $A(x,
f, \nabla f) = -\frac 1 {|\nabla f|}\, ,$ and {\it the inverse mean curvature flow} when $A(x,
f, \nabla f) = |\nabla f|\, $ (in the level set formulation, where the evolving hypersurfaces $X_t$ are given as level sets of $f\, ,$ via $X_t = \partial \{x: f(x) < t\}$).
Whereas $1$-harmonic functions have been applied to solve
Plateau's problem in Euclidean space (\cite{BDG}) and the Bernstein conjecture in hyperbolic geometry  (\cite{WW}), the
inverse mean curvature flow (i.e., a solution of the parabolic evolution equation $\frac {\partial
X_t}{\partial t} = \frac {\nu}{H}$, where $H\, ,$ assumed to be
positive, is the mean curvature of $X_t$, $\nu$
is the outward unit normal, and $\frac {\partial X_t}{\partial t}$
denotes the normal velocity field along $X_t$) has been applied to
solve fundamental problems in general relativity, such as proving the
Penrose inequality (\cite {HI}), and to compute the Yamabe
invariant of three-dimensional real projective space (\cite {BN}).
Thus, in view of numerous relations among $p$-harmonic maps,
geometric flows, and other areas of mathematics and science (e.g. cf. \cite {HI,M,KN,BN,BDG,WW,W1,W2,D,KS,ES,CGG}), we
would like to call equations of the form \r{1.1} {\it generalized
$1$-harmonic equations}, and initiate their study in this paper. \smallskip

A $W^{1,1}_{loc}(\Omega)$ function $f: \Omega \to \mathbb{R}$ is
said to be {\it a weak solution of $\r{1.1}$} in the distribution
sense, or {\it a generalized $1$-harmonic function}, if for every
$\psi(x) \in C_0^{\infty}(\Omega)\, ,$ we have
\begin{equation}\begin{aligned}\label{1.2} & \int _{\Omega} \frac {\nabla f} { |\nabla
f|}\cdot \nabla \psi dv   = -  \int _{\Omega} A(x, f(x), \nabla
f(x))\psi(x) dv\, .\end{aligned}\end{equation} We note that in the
case of the inverse mean curvature flow, our definition of a weak
solution $f\in W^{1,1}_{loc}(\Omega)$ is a {\it critical point} of the functional $J^K_f(g) = \int _K
|\nabla g| + g |\nabla f| dv\,$ with fixed $|\nabla f|$ (where $g \in
W^{1,1}_{loc}(\Omega),$ and $K \subset \subset \Omega)\, ,$ in the distribution sense. This is in contrast to that used
in G. Huisken and T. Ilmanen \cite {HI}. According to the
definition used in \cite {HI}, $f\in C^{0,1}_{loc}(\Omega)(=
W^{1,\infty}_{loc}(\Omega))$ is a weak solution of \r{1.8} if $f$
is an {\it absolute minimum} of the functional $J^K_f(g) = \int _K
|\nabla g| + g |\nabla f| dv\,$ (where $g \in
C^{0,1}_{loc}(\Omega),$ and $K \subset \subset \Omega)\, .$ By
this definition, a constant function $f$ would be a weak solution
of \r{1.8}, whereas constant functions are not admissible as weak
solutions of \r{1.1} by our definition. However, in both definitions, weak solutions are not in the  space of functions of
bounded variation for
problems of this type, and in general they are closely related by the inclusions $C^{0,1}_{loc}(\Omega)\subset
W^{1,1}_{loc}(\Omega)\, .$   \smallskip

  The purpose of this paper is to find an {\it analytic} quantity $w$ (cf.\r{1.3}) in a generalized
$1$-harmonic equation \r{1.1} on a domain $\Omega$ in a complete
Riemannian manifold $M$ that affects the behavior of weak
solutions of \r{1.1}, and establish its link with the {\it
geometry} of the domain $\Omega$, in terms of its doubling
constant $D(\Omega)$, Cheeger constant  $I_{\infty}(\Omega)$, Sobolev constant $S_{\infty}(\Omega)$ and the first
eigenvalue $\lambda _1(\Omega)$ of the Laplacian. In particular, we have: \smallskip

\noindent {\bf Theorem 2.1}$\quad$
{\it Let $\Omega \subset M$ have the doubling property \r{2.1}.
Assume $f\in W^{1,1}_{loc}(\Omega)$ is a
 weak solution of \r{1.1}. Let
\begin{equation}\label{1.3}w =
ess \inf_{x\in \Omega} |A(x, f(x), \nabla f(x))|\, .\end{equation}
  Then for every $B(x_0,r)\subset
\Omega$, the essential infimum $w$ of $|A|$ over $\Omega$ satisfies
\begin{equation}\label{1.4}0 \le w \le \frac {C_1 D(\Omega)}r\,
,\end{equation}  where constant $C_1\, $ is as in \cite {W1} Lemma 1, and
$D(\Omega)$ is a doubling constant of $\Omega$ as in \r{2.1}.}
\smallskip

\noindent {\bf Theorem 2.2}$\quad$
{\it Let  $\Omega \subset M$, the Cheeger constant $I_{\infty}(\Omega)$
 be as defined in $\r{2.3}$, and $w$ be as in \r{1.3}. Then
\begin{equation}\label{1.5}0 \le w \le I_{\infty}(\Omega),\end{equation}
and for all $x_0 \in \Omega$ and $B(x_0,r)\subset \Omega$, $w$
satisfies \begin{equation}\label{1.6} w {\rm Vol} (B(x_0,r)) \le \frac
{d}{dr} {\rm Vol} (B(x_0,r))\, \end{equation} for almost all $r > 0$. In
particular, if $I_{\infty}(\Omega) = 0$, then $w=0\, .$}\smallskip

Examples include that on a complete manifold with nonnegative
Ricci curvature, the essential infimum $w$ of $A$ over $\Omega$
satisfies \r{1.4}, on a complete manifold with negative sectional
curvature $Sec^M \le - a^2\ < 0$, $w$ satisfies \r{1.5} and
\r{1.6}, and in Euclidean space $\mathbb{R}^n, w=0$.\smallskip

As applications of our methods, we obtain some gradient bounds and
nonexistence results for \emph {the inverse mean curvature flow}:

\noindent {\bf Corollary 3.1}$\quad$
{\it Let $\Omega$ have a doubling constant and $f: \Omega \to \mathbb{R}$
be a $W^{1,1}_{loc}$ weak solution of the level set formulation of
the inverse mean curvature flow
\begin{equation}\label{1.8} \operatorname{div} \bigg(\frac {\nabla f}
{ |\nabla f|}\bigg)= | \nabla f |\, \ \ on \ \ \Omega,\end{equation}
Then \begin{equation}\label{1.9} ess \, \inf_{x \in B(x_0,r)} |\nabla f|(x) \le
\frac{C_1D(\Omega)}r\, ,\end{equation} for any $x_0 \in \Omega,$ and
$B(x_0,r)\subset \Omega$.}

\noindent {\bf Corollary 3.2}$\quad$
{\it Let  $\Omega \subset M$, the
Sobolev constant $S_{\infty}(\Omega)$ be
as defined in $\r{2.4}$, and $f\in W^{1,1}_{loc}(\Omega)$ be a
weak solution of \r{1.8}; then
\begin{equation}\label{1.10}ess \, \inf_{x \in \Omega} |\nabla f|(x)  \le
I_{\infty}(\Omega)\quad \text{and} \quad ess \, \inf_{x \in
\Omega} |\nabla f|(x)  \le S_{\infty}(\Omega).\end{equation}
and
\begin{equation}\label{1.12}ess
\, \inf_{x \in \Omega} |\nabla f|(x) \le 2 \sqrt {\lambda
_1(\Omega)}.\end{equation}}

\noindent {\bf Proposition 3.2}$\quad$
{\it Let $M$ be a complete noncompact manifold with the global doubling
property.  Then there \emph {does not exist} a $W^{1,1}_{loc}$ weak solution of the
level set formulation of the inverse mean curvature flow \r{1.8}
on $M$ that satisfies \begin{equation}\label{1.11}\lim_{r \to \infty} ess
\, \inf_{x \in B(x_0,r)} |\nabla f|(x) > 0\, ,\end{equation}
for some $x_0 \in M$.
}

By the
same method, we also obtain Liouville theorems for $p$-subharmonic
functions of constant $p$-tension field:

\noindent {\bf Theorem 4.1}$\quad$
{\it Let $n \le p\, ,$ and $f\in W^{1,p}_{loc} (\mathbb{R}^n) \cap
C(\mathbb{R}^n)$ be bounded below. If $f$ is a weak $\emph
{subsolution}$ of the $p$-Laplace equation with constant $p$-tension
field, i.e., $div(|\nabla f|^{p-2}\nabla f) = c, \,$ and with
bounded weak derivative (i.e., $|\nabla f| \le C_2$ for some
constant $C_2
> 0$), then $f$ is constant.}

One can drop the assumption on the bounded gradient if the
function is in $W^{1,p} (\mathbb{R}^n)\, :$

\noindent {\bf Theorem 4.2}$\quad$
{\it Let $n \le p\, .$ If $f\in W^{1,p} (\mathbb{R}^n) \cap
C(\mathbb{R}^n)$ is bounded below, and is a weak $\emph
{subsolution}$ of the $p$-Laplace equation with constant $p$-tension
field, then $f$ is constant.}

These augment Theorem 1.1 in \cite {BD}(cf. Theorem \ref{T:4.1})
in which a weak $\emph {supersolution}$ of the $p$-Laplace equation
was considered. It should  also be mentioned that a positive
$p$-subharmonic (resp. $p$-superharmonic) function $f$ on a
complete noncompact Riemannian manifold with one of the following:
\emph{$p$-finite}, \emph{$p$-mild}, \emph{$p$-obtuse},
\emph{$p$-moderate}, and \emph{$p$-small} growth, in which $1 < p <
\infty$, and the growth exponent $q
> p-1 \ \ (resp. \ \ q < p-1)\, ,$ is constant (\cite {WLW}).
As a further application, we obtain  in particular, for $p=1$,
that if a complete noncompact Riemannian manifold $M$ has the
global doubling property, and $f \in W^{1,1}_{loc}(M)$ is a weak
subsolution of the $1$-harmonic equation on $M$ of a constant
$1$-tension field, then $f$ is constant (cf. Proposition
\ref{P:4.1}). This generalizes the case $M = \mathbb{R}^n$ in
\cite {W1}.

Being motivated by the work in \cite {W2}(cf. Theorem \ref{T:5.0}), we study generalized $1$-harmonic functions on cocompact domains in Section 5, and obtain the nonexistence of solutions of the initial value problem for inverse mean curvature flow
in Section 6 (cf. Theorems \ref{T:5.1} and \ref{T:6.2}).

Our methods developed in \cite {W1,W2} and in this paper can be employed and carried over to other settings, such as {\it generalized constant mean curvature type equations for differential forms} in Euclidean space and on manifolds \cite{DW}, and nonhomogeneous $A$-harmonic equations \cite {L}.

We wish to thank the editor and the referee for their comments and suggestions, and Professor Junfang Li for his remarks that have made the present form of the paper possible.

\section{Generalized $1$-harmonic equations}

We recall that a domain $\Omega \subset M$ is said to have the {\it
doubling property} if $\exists D(\Omega)
> 0\, $ s.t. $\forall r
> 0\, ,$ $\forall x \in M\, ,$ with $B(x,2r) \subset \Omega \,
, $ the volumes of the geodesic balls centered at $x$ of radii
$2r$ and $r\, $ satisfy
\begin{equation}\label{2.1}{\rm Vol}(B(x,2r)) \le D(\Omega) \ \ {\rm Vol}(B(x,r))\, .\end{equation}
$M$ is said to have the {\it global doubling property} if
$\exists D(M) > 0\, $ s.t. $\forall r
>0\, ,$ $\forall x \in M\, ,$
\begin{equation}\label{2.2}{\rm Vol}(B(x,2r)) \le D(M) \ \ {\rm Vol}(B(x,r))\, .\end{equation}
For our convenience, we call $D(\Omega)$ (resp. $D(M)$) a doubling
constant of $\Omega$ (resp. $M$).\bigskip

\begin{theorem}\label{T:1.1}
Let $\Omega \subset M$ have the doubling property \r{2.1}.
Assume that $f\in W^{1,1}_{loc}(\Omega)$ is a
 weak solution of \r{1.1}. Let the essential infimum $w$ of $|A|$ over $\Omega$ be defined as in \r{1.3}.
  Then for every $B(x_0,r)\subset
\Omega$, $w$ satisfies \r{1.4}.
\end{theorem}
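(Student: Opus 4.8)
The plan is to establish the bound $w \le C_1 D(\Omega)/r$ by testing the weak formulation \r{1.2} against a carefully chosen family of Lipschitz cutoff functions supported on the ball $B(x_0,r)$, and then extracting a volume-comparison inequality that the doubling property controls. Let me think through the mechanism.

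Since $f$ is a weak solution, for every $\psi \in C_0^\infty(\Omega)$ we have
\begin{equation*}
\int_\Omega \frac{\nabla f}{|\nabla f|}\cdot \nabla \psi\, dv = -\int_\Omega A(x,f,\nabla f)\,\psi\, dv.
\end{equation*}
The first step is to approximate and test against a radial cutoff $\psi = \eta(\rho(x))$ where $\rho$ is the distance from $x_0$, $\eta$ is nonnegative, equals $1$ on $B(x_0, r/2)$, and decays linearly to $0$ across the annulus so that $|\nabla \psi| \le 2/r$ (approximating by smooth functions as needed, which is justified since the vector field $\nabla f/|\nabla f|$ has $L^\infty$-bounded components of unit length). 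The point is to bound the left-hand side: by Cauchy--Schwarz and $|\nabla f/|\nabla f|| = 1$,
\begin{equation*}
\left|\int_\Omega \frac{\nabla f}{|\nabla f|}\cdot \nabla \psi\, dv\right| \le \int_\Omega |\nabla \psi|\, dv \le \frac{2}{r}\,{\rm Vol}(B(x_0,r)).
\end{equation*}
This is precisely the type of estimate packaged in the cited \cite{W1} Lemma 1, which supplies the constant $C_1$; I would invoke it to get the sharp form of this boundary/gradient term rather than re-deriving it.

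For the right-hand side, by the definition \r{1.3} of $w$ as the essential infimum of $|A|$, and taking $\psi \ge 0$ with $\psi = 1$ on an inner ball, we get a lower bound
\begin{equation*}
\left|\int_\Omega A\,\psi\, dv\right| \ge w\int_\Omega \psi\, dv \ge w\,{\rm Vol}(B(x_0,r/2)),
\end{equation*}
provided $A$ has a consistent sign (or after splitting into its positive and negative parts, using the nonnegative/nonpositive hypothesis on $A$ stated around \r{1.1}); this sign control is what lets the essential infimum of $|A|$ pass to the integral. Combining the two sides yields $w\,{\rm Vol}(B(x_0,r/2)) \le (C_1/r)\,{\rm Vol}(B(x_0,r))$, and here the doubling property \r{2.1} enters: ${\rm Vol}(B(x_0,r)) \le D(\Omega)\,{\rm Vol}(B(x_0,r/2))$, so dividing through gives $0 \le w \le C_1 D(\Omega)/r$, which is \r{1.4}.

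The main obstacle I anticipate is the handling of the set where $|\nabla f| = 0$, so that the integrand $\nabla f/|\nabla f|$ in \r{1.2} is genuinely only defined a.e. and the test-function manipulation must be done through the proper $W^{1,1}_{loc}$ weak formulation rather than naive integration by parts; making the cutoff estimate and the sign-based lower bound on $\int A\psi$ rigorous simultaneously is the delicate point, and is exactly where the technique of \cite{W1} is needed. The doubling step and the Cauchy--Schwarz bound are routine once the weak formulation is correctly set up.
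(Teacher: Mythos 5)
Your main estimate reproduces the paper's argument for its principal case: the same cutoff $\psi$ from \cite{W1} Lemma 1 (with $\psi \equiv 1$ on $B(x_0,\frac r2)$, supported in $B(x_0,r)$, $|\nabla \psi| \le \frac{C_1}{r}$), the same Cauchy--Schwarz bound $\big|\int \frac{\nabla f}{|\nabla f|}\cdot \nabla\psi\, dv\big| \le \int |\nabla\psi|\, dv$, and the same use of the doubling property \r{2.1} to replace ${\rm Vol}(B(x_0,r))$ by $D(\Omega)\,{\rm Vol}(B(x_0,\frac r2))$. For $A$ of constant sign this is complete and is exactly the paper's Case 2.

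The gap is the other case allowed by the hypotheses of \r{1.1}: $A(x,f(x),\nabla f(x))$ continuous on $\Omega$ but taking both positive and negative values. There your key lower bound $\big|\int_\Omega A\psi\, dv\big| \ge w \int_\Omega \psi\, dv$ fails outright, since cancellation can make $\int A\psi\, dv$ small (even zero) while the essential infimum of $|A|$ is not a priori controlled; and your proposed repair, ``splitting $A$ into its positive and negative parts,'' does not work: the weak formulation \r{1.2} controls only $\int A\psi\, dv$, not $\int A^{+}\psi\, dv$ and $\int A^{-}\psi\, dv$ separately, and $w$, being the essential infimum of $|A| = A^{+}+A^{-}$, bounds neither part from below. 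The paper disposes of this case before any integration (its Case 1): since the domain $\Omega$ is connected and $A$ is continuous and changes sign, the intermediate value theorem forces $A$ to vanish at some point, whence by continuity $w = 0$ and \r{1.4} holds trivially. Adding this one-line case distinction makes your proof coincide with the paper's; without it, the argument as written does not cover all functions $A$ admitted by \r{1.1}.
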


\begin{proof} \quad   We consider two cases:

Case 1. $A(x, f(x), \nabla
f(x))$ is continuous on $\Omega$ and assumes both positive and negative values: By the
intermediate value theorem, $A(x, f(x), \nabla
f(x))$ assumes value $0$ at some point,
and thus $w=\inf_{x\in \Omega}|A(x, f(x), \nabla
f(x))|=0$.

Case 2. $A(x, f(x), \nabla
f(x))$ is nonpositive-valued or nonnegative-valued: Let $\psi \ge 0$ be
as in \cite {W1} Lemma 1, in which $t=r, s= \frac r2$. Substituting
$\psi$ into \r{1.2}, applying \r{1.3} and the Cauchy-Schwarz
inequality, we have

$$
\aligned 0 & \le w {\rm Vol}(B(x_0,\frac r2)) \\
& = \int _{B(x_0,\frac r2)}
w \psi(x) dx  \\
& \le \bigg|\int _{B(x_0,\frac r2)} A(x, f(x), \nabla
f(x))\psi(x) dv \bigg|\\
& \le \bigg|\int _{B(x_0,r)} A(x, f(x), \nabla
f(x))\psi(x) dv\bigg|  \\& =  \bigg|\int _{B(x_0,r)} \frac {\nabla
f} { |\nabla f|}\cdot \nabla \psi dv\bigg| \\ & \le  \int _{B(x_0,r)} |\nabla
\psi| dv \\
& \le \frac{C_1}{r}{\rm Vol}
(B(x_0,r))\\
& \le \frac{C_1 D(\Omega)}{r}{\rm Vol}
(B(x_0,\frac r2))\endaligned
$$
\end{proof}

By letting $r \to \infty$ in the above expression, we obtain immediately the following

\begin{corollary}\label{C:2.1}
Let $M$ be a complete noncompact manifold with a doubling
constant. Then there does not exist a $W^{1,1}_{loc}$ weak
subsolution $f: M \to \mathbb{R}$ of equation \r{1.1} on $M\, $
such that for some $x_0 \in M\, ,$ $$\lim_{r\to \infty} ess \,
\inf_{x \in B(x_0,r)} |A(x, f(x), \nabla f(x))| > 0\, .$$
\end{corollary}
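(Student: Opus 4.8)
The plan is to argue by contradiction, using the uniform bound \r{1.4} of Theorem \ref{T:1.1} applied to the whole manifold. Suppose, contrary to the assertion, that a $W^{1,1}_{loc}$ weak subsolution $f \colon M \to \mathbb{R}$ of \r{1.1} exists with $\lim_{r\to\infty} ess\,\inf_{x\in B(x_0,r)} |A(x,f(x),\nabla f(x))| > 0$ for some $x_0 \in M$. Since $M$ is complete and noncompact with a global doubling constant $D(M)$, so that \r{2.2} holds with $\Omega = M$, every geodesic ball $B(x_0,r)$ is contained in $M$, and Theorem \ref{T:1.1} applies with $\Omega = M$ and $D(\Omega) = D(M)$. (In the sign-definite case relevant here, the subsolution inequality only reinforces the estimate: testing against the nonnegative $\psi$ of \cite{W1} Lemma 1 replaces the equality $\int \frac{\nabla f}{|\nabla f|}\cdot \nabla \psi\,dv = -\int A\psi\,dv$ by the one-sided bound needed in the chain of Theorem \ref{T:1.1}, Case 2.)

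First I would make precise the link between the essential infimum $w$ of $|A|$ over all of $M$, as defined in \r{1.3}, and the limit in the hypothesis. Writing $w_r = ess\,\inf_{x\in B(x_0,r)} |A(x,f(x),\nabla f(x))|$, the family $\{w_r\}_{r>0}$ is nonincreasing in $r$, since the balls are nested and the essential infimum over a larger set can only decrease. Because $M = \bigcup_{r>0} B(x_0,r)$, one checks that $\lim_{r\to\infty} w_r = \inf_{r>0} w_r = ess\,\inf_{x\in M} |A(x,f(x),\nabla f(x))| = w$: any set of positive measure on which $|A|$ is within $\varepsilon$ of $w$ is exhausted by the balls $B(x_0,r)$ up to a null set, so $w_r \le w + \varepsilon$ for $r$ large, while $w_r \ge w$ always. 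Hence the hypothesis is precisely the statement $w > 0$.

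Next I would invoke \r{1.4} directly: for every $r > 0$ we have $0 \le w \le \frac{C_1 D(M)}{r}$. Letting $r \to \infty$ forces $w \le 0$, whence $w = 0$, contradicting $w > 0$. This contradiction establishes that no such weak subsolution can exist, which is the desired conclusion.

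The step I expect to require the most care is the identification $\lim_{r\to\infty} w_r = w$, namely that the essential infimum over the exhausting family of balls converges to the essential infimum over $M$. This is a routine measure-theoretic fact, but it is the one place where the noncompactness and the \emph{global} (rather than merely local) doubling property are genuinely used: the bound \r{1.4} must hold with a single constant $D(M)$ for arbitrarily large $r$ before the limit $r\to\infty$ can be taken and the conclusion $w=0$ extracted.
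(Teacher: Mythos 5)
Your proposal is correct and takes essentially the same approach as the paper: the paper's proof of Corollary \ref{C:2.1} is precisely ``letting $r \to \infty$'' in the chain of inequalities from the proof of Theorem \ref{T:1.1} (i.e., in the bound \eqref{1.4} with $\Omega = M$ and $D(\Omega) = D(M)$), which is exactly your contradiction argument. The only difference is that you explicitly supply the routine measure-theoretic identification $\lim_{r\to\infty} ess\,\inf_{B(x_0,r)}|A| = ess\,\inf_M |A|$, which the paper leaves implicit (equivalently, one can run the paper's chain of inequalities with the essential infimum taken over $B(x_0,\frac r2)$ in place of $w$, and no such identification is needed).
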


Define the {\it Cheeger constant} $I_{\infty}(\Omega)  $ by
\begin{equation}\label{2.3} I_{\infty}(\Omega) = \sup \{ C: {\rm Area} (\partial \Omega^{\prime}) \ge C \,
{\rm Vol}(\Omega^{\prime}) \ \ for \ \ any \ \ domain \ \
\Omega^{\prime} \subset \subset \Omega\}\end{equation}

\begin{theorem}\label{T:1.2}
Let  $\Omega \subset M$, the Cheeger constant $I_{\infty}(\Omega)$
 be as defined in $\r{2.3}$, and $w$ be as in \r{1.3}. Then $w$ satisfies \r{1.5}, and for all $x_0 \in \Omega$ and $B(x_0,r)\subset \Omega$, $w$
satisfies \r{1.6} for almost all $r > 0$. In
particular, if $I_{\infty}(\Omega) = 0$, then $w=0\, .$
\end{theorem}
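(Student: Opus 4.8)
The plan is to prove Theorem 2.2 (labeled \texttt{T:1.2}) by establishing the differential inequality \r{1.6} first, deriving the Cheeger bound \r{1.5} from it, and then reading off the final degeneracy statement. The key object is the essential infimum $w$ of $|A|$, and the strategy mirrors the argument of Theorem \ref{T:1.1}, but with a sharper choice of test function adapted to a single ball rather than an annular cutoff.

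\medskip

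\noindent\textbf{Step 1: the volume-derivative inequality \r{1.6}.} First I would fix $x_0 \in \Omega$ and $r>0$ with $B(x_0,r)\subset\Omega$, and argue as in Theorem \ref{T:1.1}, Case 2, reducing to the situation where $A$ is of one sign (the mixed-sign case forcing $w=0$ by the intermediate value theorem). The idea is to insert into the weak formulation \r{1.2} a sequence of Lipschitz test functions $\psi_\epsilon$ that approximate the characteristic function $\chi_{B(x_0,r)}$: take $\psi_\epsilon(x)=1$ on $B(x_0,r)$, $\psi_\epsilon(x)=0$ outside $B(x_0,r+\epsilon)$, and interpolate linearly in the distance, so that $|\nabla\psi_\epsilon|=\tfrac1\epsilon$ on the thin shell $B(x_0,r+\epsilon)\setminus B(x_0,r)$. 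Substituting $\psi_\epsilon$ into \r{1.2}, bounding $|\nabla f|/|\nabla f|\cdot\nabla\psi_\epsilon$ by $|\nabla\psi_\epsilon|$ via Cauchy--Schwarz exactly as in the displayed chain in the previous proof, gives
\begin{equation*}
w\,{\rm Vol}(B(x_0,r)) \le \frac1\epsilon\,\big({\rm Vol}(B(x_0,r+\epsilon))-{\rm Vol}(B(x_0,r))\big).
\end{equation*}
Letting $\epsilon\to0^+$ and invoking the coarea formula (so that the right-hand difference quotient converges to $\tfrac{d}{dr}{\rm Vol}(B(x_0,r))={\rm Area}(\partial B(x_0,r))$ for almost every $r$) yields \r{1.6}.

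\medskip

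\noindent\textbf{Step 2: from \r{1.6} to the Cheeger bound \r{1.5}.} Here I would use the definition \r{2.3} of $I_{\infty}(\Omega)$ directly. Applying \r{1.6} with $\Omega' = B(x_0,r)\subset\subset\Omega$ and identifying $\tfrac{d}{dr}{\rm Vol}(B(x_0,r))$ with ${\rm Area}(\partial B(x_0,r))$ gives, for almost all admissible $r$,
\begin{equation*}
w \le \frac{{\rm Area}(\partial B(x_0,r))}{{\rm Vol}(B(x_0,r))}.
\end{equation*}
Since $I_{\infty}(\Omega)$ is the supremum of constants $C$ for which ${\rm Area}(\partial\Omega')\ge C\,{\rm Vol}(\Omega')$ holds for \emph{all} $\Omega'\subset\subset\Omega$, the number $w$ is a constant satisfying the isoperimetric-type inequality on the family of balls; taking an appropriate ball (or infimum over balls) shows $w$ is a legitimate competitor and hence $w\le I_{\infty}(\Omega)$, establishing \r{1.5}. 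Combined with $w\ge0$ from its definition \r{1.3}, this gives the full two-sided bound, and the final assertion follows trivially: if $I_{\infty}(\Omega)=0$ then $0\le w\le0$, forcing $w=0$.

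\medskip

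\noindent\textbf{Main obstacle.} I expect the delicate point to be the almost-everywhere differentiability of $r\mapsto{\rm Vol}(B(x_0,r))$ and the rigorous passage $\epsilon\to0$ in Step 1, which requires the coarea formula and the monotonicity of the volume function to guarantee that the difference quotients converge to the boundary area for a.e.\ $r$; one must also confirm that the approximating test functions $\psi_\epsilon$ are genuinely admissible in \r{1.2} (they are Lipschitz with compact support in $\Omega$, hence can be obtained by smoothing, which is exactly where the $W^{1,1}_{loc}$ regularity of $f$ and the uniform bound $|\nabla f|/|\nabla f|\le1$ are used to justify the limit). The inequality \r{1.5} then rests on correctly matching the pointwise balls against the defining class $\Omega'\subset\subset\Omega$ in \r{2.3}, which is straightforward once \r{1.6} is in hand.
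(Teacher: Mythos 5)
Your Step 1 is sound and is essentially a rigorous version of what the paper does for balls: plugging in cutoffs that approximate $\chi_{B(x_0,r)}$, using $\bigl|\tfrac{\nabla f}{|\nabla f|}\bigr|\le 1$, and invoking a.e.\ differentiability of the monotone function $r\mapsto{\rm Vol}(B(x_0,r))$ does give \r{1.6}. The genuine gap is in Step 2: you cannot derive the Cheeger bound \r{1.5} from \r{1.6}, because the implication runs the wrong way. By definition \r{2.3}, a constant $C$ is a competitor for $I_\infty(\Omega)$ only if ${\rm Area}(\partial\Omega')\ge C\,{\rm Vol}(\Omega')$ holds for \emph{every} domain $\Omega'\subset\subset\Omega$; equivalently, $I_\infty(\Omega)=\inf_{\Omega'\subset\subset\Omega}{\rm Area}(\partial\Omega')/{\rm Vol}(\Omega')$. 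Your Step 1 gives $w\le{\rm Area}(\partial B)/{\rm Vol}(B)$ for balls only, i.e.\ $w\le\inf_{\text{balls}}{\rm Area}(\partial B)/{\rm Vol}(B)$, and this infimum over balls is in general \emph{larger} than the infimum over all domains (think of a domain where some non-ball region, e.g.\ a long thin or dumbbell-shaped $\Omega'$, realizes a much smaller isoperimetric ratio than any ball). So "satisfying the inequality on the family of balls" does not make $w$ a legitimate competitor in the supremum \r{2.3}, and \r{1.5} does not follow.

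The paper proves the two claims in the opposite order, and this order matters. It applies Stokes' theorem to the vector field $\tfrac{\nabla f}{|\nabla f|}$ (whose divergence is $A$ and whose norm is at most $1$) over an \emph{arbitrary} domain $\Omega'\subset\subset\Omega$, obtaining
\begin{equation*}
w\,{\rm Vol}(\Omega')\;\le\;\Bigl|\int_{\Omega'}A\,dx\Bigr|
=\Bigl|\int_{\partial\Omega'}\tfrac{\nabla f}{|\nabla f|}\cdot\nu\,dS\Bigr|
\;\le\;{\rm Area}(\partial\Omega')
\end{equation*}
for every such $\Omega'$; this makes $w$ a genuine competitor in \r{2.3} and yields \r{1.5} at once, with \r{1.6} then obtained by specializing $\Omega'=B(x_0,r)$ and using the coarea formula. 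Your approach is repairable with the same idea: run your approximation argument of Step 1 with cutoffs adapted to an arbitrary smooth domain $\Omega'\subset\subset\Omega$ (so that $\int|\nabla\psi_\epsilon|\,dv\to{\rm Area}(\partial\Omega')$), which proves the isoperimetric inequality for all competitors and hence \r{1.5}; but as written, deducing \r{1.5} from the ball-only inequality \r{1.6} is not valid.
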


\begin{proof} \quad By \r{1.1}, \r{1.3}, Stokes' theorem, and the
Cauchy-Schwarz inequality, we have, for any domain $
\Omega^{\prime} \subset \subset \Omega$  \[
\begin{array}{rll}  w {\rm Vol}(\Omega^{\prime})  & \le  & \bigg| \int _{\Omega^{\prime}}
A(x, f(x), \nabla f(x)) dx \bigg|= \bigg|\int _{\partial \Omega^{\prime}}
\frac {\nabla f} { |\nabla f|}\cdot \nu dS\bigg| \\ & \le & \int
_{\partial \Omega^{\prime}} 1 dS
=  {\rm Vol} (\partial \Omega^{\prime}) \, ,
\end{array}
\]
This yields \r{1.5} immediately, and \r{1.6} by the coarea formula and
letting $\Omega^{\prime} = B(x_0,r)$.
\end{proof}


Define a {\it Sobolev constant} $S_{\infty}(\Omega)$ by

\begin{equation}\label{2.4}S_{\infty}(\Omega) = \sup \{ C: C \int _{\Omega} |f| dv \le \int _{\Omega} |\nabla f| dv \,
 \ \ for \ \ any \ \ f \in C_0^{\infty}(\Omega)\}\end{equation}
 Then one always has:

\begin{corollary}\label{C:2.2} $$w \le S_{\infty}(\Omega)\, .$$
In particular, if $S_{\infty}(\Omega)=0\, ,$ then $w =0\, .$

\end{corollary}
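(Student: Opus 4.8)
The plan is to establish the inequality by the same test-function argument used for Theorems \ref{T:1.1} and \ref{T:1.2}, now with compactly supported smooth test functions in place of the cutoffs of \cite{W1} or of Stokes' theorem on subdomains. As in the proof of Theorem \ref{T:1.1}, I would first dispose of the case in which $A(x,f(x),\nabla f(x))$ is continuous and changes sign: there $w=0$ by the intermediate value theorem, and the conclusion $w\le S_{\infty}(\Omega)$ is immediate, since $C=0$ always satisfies the defining inequality in \eqref{2.4}, so that $S_{\infty}(\Omega)\ge 0$. Thus it suffices to treat the case in which $A$ is of one sign.

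Next, for a nonnegative $\psi\in C_0^{\infty}(\Omega)$ I would insert $\psi$ into the weak formulation \eqref{1.2}. Using the definition \eqref{1.3} of $w$ together with the fixed sign of $A$ --- so that $|A|\ge w$ almost everywhere forces $\bigl|\int_{\Omega} A\,\psi\,dv\bigr|\ge w\int_{\Omega}\psi\,dv$ when $\psi\ge 0$ --- and then the Cauchy--Schwarz inequality in the form $\bigl|\frac{\nabla f}{|\nabla f|}\cdot\nabla\psi\bigr|\le|\nabla\psi|$, I obtain
\[
w\int_{\Omega}\psi\,dv\le\Bigl|\int_{\Omega} A(x,f,\nabla f)\,\psi\,dv\Bigr|=\Bigl|\int_{\Omega}\frac{\nabla f}{|\nabla f|}\cdot\nabla\psi\,dv\Bigr|\le\int_{\Omega}|\nabla\psi|\,dv .
\]
This is precisely the Sobolev-type inequality appearing in \eqref{2.4}, but so far only for nonnegative test functions.

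To reach the supremum defining $S_{\infty}(\Omega)$ over all $f\in C_0^{\infty}(\Omega)$, I would extend the inequality to sign-changing $f$ by applying it to $|f|$, using that $|f|$ is Lipschitz with compact support, that $\bigl|\nabla|f|\bigr|=|\nabla f|$ almost everywhere, and that $\int_{\Omega}|f|\,dv$ and $\int_{\Omega}|\nabla f|\,dv$ are unchanged; a standard smoothing of $|f|$ by nonnegative mollified functions lets me pass the already-established inequality to the limit. This yields $w\int_{\Omega}|f|\,dv\le\int_{\Omega}|\nabla f|\,dv$ for every $f\in C_0^{\infty}(\Omega)$, so that $w$ is one of the admissible constants $C$ in \eqref{2.4}, whence $w\le S_{\infty}(\Omega)$; the ``in particular'' clause then follows at once. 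Equivalently, one may combine the bound $w\le I_{\infty}(\Omega)$ from Theorem \ref{T:1.2} with the Federer--Fleming coarea inequality $I_{\infty}(\Omega)\le S_{\infty}(\Omega)$, obtained by writing $\int_{\Omega}|\nabla f|\,dv=\int_0^{\infty}\mathrm{Area}(\partial\{|f|>t\})\,dt$ and $\int_{\Omega}|f|\,dv=\int_0^{\infty}\mathrm{Vol}(\{|f|>t\})\,dt$ and applying the isoperimetric inequality \eqref{2.3} level by level.

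The main obstacle is the passage from nonnegative smooth $\psi$ to general $f$: one must justify the approximation of $|f|$ by admissible test functions and the almost-everywhere identity $\bigl|\nabla|f|\bigr|=|\nabla f|$, and keep the two sign conventions for $A$ (nonnegative- versus nonpositive-valued) straight so that the lower bound $\bigl|\int_{\Omega} A\,\psi\,dv\bigr|\ge w\int_{\Omega}\psi\,dv$ holds in each case. Once these routine points are handled, the estimate is immediate.
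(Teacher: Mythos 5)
Your proposal is correct, and your \emph{primary} argument takes a genuinely different route from the paper's. The paper proves Corollary \ref{C:2.2} in one line: it quotes the Federer--Fleming theorem \cite{FF} that $S_{\infty}(\Omega)=I_{\infty}(\Omega)$ and combines it with the bound $w\le I_{\infty}(\Omega)$ of Theorem \ref{T:1.2} --- which is precisely your closing ``equivalently'' remark, except that you only need the easy direction $I_{\infty}(\Omega)\le S_{\infty}(\Omega)$, obtained by the coarea/layer-cake argument you sketch. Your main argument instead proves $w\le S_{\infty}(\Omega)$ directly from the weak formulation \eqref{1.2}: test with nonnegative $\psi\in C_0^{\infty}(\Omega)$, use the one-signedness of $A$ and Cauchy--Schwarz to get $w\int_{\Omega}\psi\,dv\le\int_{\Omega}|\nabla\psi|\,dv$, then extend to sign-changing test functions via $|f|$ and smoothing, so that $w$ is an admissible constant in \eqref{2.4}. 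This is self-contained (it bypasses both the Cheeger constant and Federer--Fleming), it parallels the proof of Theorem \ref{T:1.1}, and it has the added virtue of staying entirely within the distributional formulation: by contrast, the paper's proof rests on Theorem \ref{T:1.2}, whose argument applies Stokes' theorem to the vector field $\nabla f/|\nabla f|$ of a $W^{1,1}_{loc}$ weak solution, a step that is only formal at that regularity. The cost of your route is the approximation step: since $\Omega$ sits in a Riemannian manifold you cannot literally convolve, so the smoothing of $|f|$ should be done by Greene--Wu type Riemannian mollification or in local charts with a partition of unity; note also that you only need the one-sided bound $\int_{\Omega}|\nabla\psi_{\epsilon}|\,dv\le\int_{\Omega}\bigl|\nabla|f|\bigr|\,dv+o(1)$ together with $\int_{\Omega}\psi_{\epsilon}\,dv\to\int_{\Omega}|f|\,dv$, not convergence of the gradient integrals. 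With these routine points in place, both of your routes are sound.
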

\begin{proof} This follows from a theorem of Federer and Fleming
(\cite {FF}) that $S_{\infty}(\Omega)  = I_{\infty}(\Omega) \, ,$
and Theorem \ref{T:1.2}.
\end{proof}

\begin{corollary}\label{C:2.3}
Under the assumption of Theorem \ref{T:1.2}, or Corollary
\ref{C:2.2}, we have
$$w \le 2 \sqrt {\lambda _1(\Omega)}$$ where $\lambda _1(\Omega)$
is the first eigenvalue of the Laplacian on $\Omega$.
\end{corollary}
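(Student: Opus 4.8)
The plan is to chain the bound $w \le I_{\infty}(\Omega)$ already supplied by Theorem \ref{T:1.2} (equivalently by Corollary \ref{C:2.2}, since the Federer--Fleming theorem gives $S_{\infty}(\Omega) = I_{\infty}(\Omega)$) together with the Cheeger-type inequality $I_{\infty}(\Omega) \le 2\sqrt{\lambda_1(\Omega)}$, whereupon $w \le I_{\infty}(\Omega) \le 2\sqrt{\lambda_1(\Omega)}$ is immediate. Thus the only new ingredient to establish is the estimate relating the Cheeger (equivalently Sobolev) constant to the first Dirichlet eigenvalue, and the proof reduces to that single inequality.

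To prove $I_{\infty}(\Omega) \le 2\sqrt{\lambda_1(\Omega)}$, first I would fix an arbitrary $f \in C_0^{\infty}(\Omega)$ and apply the Sobolev constant definition \r{2.4} to the compactly supported function $f^2$, which yields $S_{\infty}(\Omega) \int_{\Omega} f^2\, dv \le \int_{\Omega} |\nabla (f^2)|\, dv = 2\int_{\Omega} |f|\,|\nabla f|\, dv$. Next I would apply the Cauchy--Schwarz inequality to the right-hand side to get $2\int_{\Omega} |f|\,|\nabla f|\, dv \le 2\big(\int_{\Omega} f^2\, dv\big)^{1/2}\big(\int_{\Omega} |\nabla f|^2\, dv\big)^{1/2}$. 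Combining these and cancelling one factor of $\big(\int_{\Omega} f^2\big)^{1/2}$ produces $S_{\infty}(\Omega)\big(\int_{\Omega} f^2\, dv\big)^{1/2} \le 2\big(\int_{\Omega} |\nabla f|^2\, dv\big)^{1/2}$, i.e. $\tfrac14 S_{\infty}(\Omega)^2 \le \int_{\Omega} |\nabla f|^2\, dv \big/ \int_{\Omega} f^2\, dv$ for every nonzero $f \in C_0^{\infty}(\Omega)$.

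Taking the infimum of the Rayleigh quotient over all such $f$, the right-hand side becomes $\lambda_1(\Omega)$ by the variational characterization of the first Dirichlet eigenvalue of the Laplacian, so $\tfrac14 S_{\infty}(\Omega)^2 \le \lambda_1(\Omega)$, that is $S_{\infty}(\Omega) \le 2\sqrt{\lambda_1(\Omega)}$. Since $S_{\infty}(\Omega) = I_{\infty}(\Omega)$ by Federer--Fleming, this gives $I_{\infty}(\Omega) \le 2\sqrt{\lambda_1(\Omega)}$, and the corollary follows by combining with \r{1.5}.

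The main obstacle I anticipate is technical rather than conceptual: justifying the substitution of $f^2$ into \r{2.4}, since the definition is phrased for smooth test functions while $|f|^2$ need not be smooth where $f$ vanishes or changes sign. This is handled by a routine approximation argument (or by noting that $|\nabla(f^2)| = 2|f|\,|\nabla f|$ holds almost everywhere and mollifying), and it does not disturb the inequality. Once this point is dispatched, the estimate is a direct Cauchy--Schwarz computation.
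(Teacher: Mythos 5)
Your proposal is correct, and its top-level structure is exactly the paper's: chain $w \le I_{\infty}(\Omega)$ from Theorem \ref{T:1.2} with the Cheeger inequality $\lambda_1(\Omega) \ge \tfrac14 I_{\infty}(\Omega)^2$. The difference is in how the second ingredient is obtained: the paper disposes of it with a citation to Cheeger \cite{C}, whereas you reprove it from scratch by substituting $f^2$ into the Sobolev inequality \eqref{2.4}, applying Cauchy--Schwarz, taking the infimum of the Rayleigh quotient, and then converting $S_{\infty}$ to $I_{\infty}$ via Federer--Fleming \cite{FF} (the same identity the paper uses in Corollary \ref{C:2.2}). This is the classical proof of Cheeger's inequality, and it is carried out correctly; what it buys you is a self-contained corollary, at the cost of implicitly assuming the variational characterization of $\lambda_1(\Omega)$ as the infimum of $\int_{\Omega}|\nabla f|^2\,dv \big/ \int_{\Omega} f^2\,dv$ over nonzero $f \in C_0^{\infty}(\Omega)$, which is standard. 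One remark: the technical obstacle you anticipate at the end is vacuous. If $f \in C_0^{\infty}(\Omega)$ then $f^2 = |f|^2$ is itself in $C_0^{\infty}(\Omega)$, with $\nabla(f^2) = 2f\,\nabla f$ everywhere, so the substitution into \eqref{2.4} is immediate and no mollification or approximation argument is needed; the worry would only arise if you had used $|f|$ rather than $f^2$ as the test function.
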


\begin{proof} This follows from the inequality
$\lambda _1(\Omega) \ge \frac 14 I_{\infty}(\Omega)^2\, $(cf. \cite {C}),  and Theorem
\ref{T:1.2}.
\end{proof}

\begin{proposition}\label{P:2.1}
 Let $M$ be a complete noncompact Riemannian manifold, and $w$ be as in \r{1.3} in which $\Omega = M$.
 Then, for every $x_0 \in M\, ,$ there exists a constant $K$ such that  $w$ satisfies
 \begin{equation}\label{2.5} \lim \inf _{r\to \infty} e^{-wr} {\rm Vol} (B(x_0,r)) \ge K > 0\, ,\end{equation}
 In particular, if $M$ has $p$th-power volume growth, for some $p \ge 0$  $($i.e., ${\rm Vol} (B(x_0,r)) = O (r^p)$ as $r \to \infty$ for some $x_0 \in M$$)$, then $w =0\, .$
\end{proposition}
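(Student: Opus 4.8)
The plan is to convert the pointwise volume inequality \eqref{1.6} supplied by Theorem \ref{T:1.2} into a first-order differential inequality for the volume function and then integrate it using the standard integrating factor $e^{-wr}$. Writing $V(r) = {\rm Vol}(B(x_0,r))$, the first thing I would note is that, since $M$ is complete and noncompact, the Hopf--Rinow theorem guarantees that every closed ball $\overline{B(x_0,r)}$ is compact, so $B(x_0,r) \subset\subset M = \Omega$ and \eqref{1.6} applies for almost every $r > 0$; it reads $w\,V(r) \le V'(r)$.

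Next I would recast this as a monotonicity statement. By the coarea formula $V(r) = \int_0^r {\rm Area}(\partial B(x_0,t))\,dt$, so $V$ is locally absolutely continuous; hence $g(r) := e^{-wr}V(r)$ is also locally absolutely continuous, and for almost every $r$
$$g'(r) = e^{-wr}\big(V'(r) - w V(r)\big) \ge 0.$$
An absolutely continuous function with an a.e.\ nonnegative derivative is non-decreasing, so $g$ is non-decreasing on $(0,\infty)$. Fixing any $r_0 > 0$ and setting $K := e^{-wr_0}V(r_0)$, which is strictly positive because geodesic balls in a Riemannian manifold have positive volume, I obtain $g(r) \ge K > 0$ for all $r \ge r_0$. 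This establishes \eqref{2.5} (indeed the $\liminf$ may be upgraded to a genuine limit, possibly $+\infty$, since $g$ is monotone).

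For the final assertion I would argue by contradiction. Suppose $M$ has $p$th-power volume growth, so $V(r) \le C r^p$ for all large $r$, and suppose $w > 0$. Then $e^{-wr}V(r) \le C\,r^p e^{-wr} \to 0$ as $r \to \infty$, which forces $\liminf_{r\to\infty} e^{-wr}V(r) = 0$ and contradicts the lower bound $K > 0$ just obtained. Hence $w = 0$.

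The only genuinely delicate point is the second step: passing from the almost-everywhere inequality \eqref{1.6} to the honest monotonicity of $g$. This is legitimate precisely because $V$ is absolutely continuous---a fact already used, via the coarea formula, in the proof of Theorem \ref{T:1.2}---so that $g' \ge 0$ a.e.\ integrates up to $g(r_2) - g(r_1) = \int_{r_1}^{r_2} g'(r)\,dr \ge 0$. Everything else is the routine integrating-factor computation together with the elementary fact that exponential decay dominates polynomial growth.
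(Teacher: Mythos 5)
Your proof is correct and takes essentially the same route as the paper: both start from the differential inequality \eqref{1.6} of Theorem~\ref{T:1.2} with $\Omega = M$, integrate it to get the exponential lower bound $e^{-wr}\,{\rm Vol}(B(x_0,r)) \ge K = e^{-wr_0}\,{\rm Vol}(B(x_0,r_0)) > 0$, and then contradict polynomial volume growth. The only difference is bookkeeping---you use the integrating factor $e^{-wr}$ and monotonicity of $g(r)=e^{-wr}V(r)$, while the paper divides \eqref{1.6} by ${\rm Vol}(B(x_0,r))$ and integrates the logarithmic derivative before exponentiating---and your version makes explicit the absolute-continuity point (via the coarea formula) that the paper leaves implicit.
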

\begin{proof} Let $r_0$ and $r_1$ be any two numbers with $r_1 > r_0 > 0\, .$ Integrating \r{1.6} with respect to $r$ from $r_0$ to $r_1\, ,$ we have
\begin{equation}\begin{aligned}\notag & wr_1 - wr_0  = \int _{r_0}^{r_1} w\, dr \le \int _{r_0}^{r_1} \frac {\frac {d}{dr}{\rm Vol} (B(x_0,r))}{{\rm Vol} (B(x_0,r))} dr   \\
&\hskip.3in =  \log {\rm Vol} (B(x_0,r_1)) - \log {\rm Vol}
(B(x_0,r_0))\, ,\end{aligned}\end{equation} for any $r_1
> r_0\, .$ This
implies, by the monotonicity of the exp function, ${e^{wr_1}}\cdot
{e^{- wr_0}}\le {\rm Vol} (B(x_0,r_1))\cdot ({\rm Vol}
(B(x_0,r_0)))^{- 1}\, ,$ i.e., $e^{- wr_1}{\rm Vol} (B(x_0,r_1))
\ge K > 0\, ,$ for any $r_1
> r_0\, ,$ where $K = e^{- cr_0}{\rm Vol} (B(x_0,r_0))\, .$ Now taking
$r_1$ to $\infty$ gives the desired \r{2.5}. Suppose
the contrary for the last assertion; then \r{2.5} would lead to $0 =
\lim \inf _{r\to \infty} e^{-wr} r^p >0\, ,$ a contradiction.
\end{proof}

\section{The inverse mean curvature flow}

For completeness and the readers' reference,  we collect the
consequences of our observation on the inverse mean curvature flow
in this section.
\begin{corollary}\label{C:1.1}
Let $\Omega$ have a doubling constant, and $f: \Omega \to \mathbb{R}$
be a $W^{1,1}_{loc}$ weak solution of the level set formulation of
the inverse mean curvature flow \r{1.8}.
Then $ess \, \inf_{x \in B(x_0,r)} |\nabla f|(x) \, $ satisfies \r{1.9}, for any $x_0 \in \Omega,$ and
$B(x_0,r)\subset \Omega$.
\end{corollary}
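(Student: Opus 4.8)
The plan is to recognize (1.8) as the special case of the generalized $1$-harmonic equation (1.1) in which $A(x,f,\nabla f)=|\nabla f|$, and then to specialize the argument of Theorem \ref{T:1.1}. First I would record the two features of this choice of $A$ that do all the work: it is \emph{nonnegative-valued}, so we are always in Case 2 of the proof of Theorem \ref{T:1.1} and no sign cancellation arises, and moreover $|A(x,f(x),\nabla f(x))|=|\nabla f|(x)$, so the analytic quantity $w$ of (1.3) is literally the essential infimum of $|\nabla f|$.

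The only genuine point requiring care is that the conclusion (1.9) is \emph{local}: it bounds $ess \inf_{x\in B(x_0,r)}|\nabla f|(x)$ over the fixed ball $B(x_0,r)$, whereas (1.4) as stated concerns the global infimum $w=ess \inf_{x\in\Omega}|A|$, and these two quantities are ordered the wrong way for (1.9) to follow formally from (1.4). I would therefore not invoke (1.4) verbatim but instead rerun its proof with the localized quantity $w_r:=ess \inf_{x\in B(x_0,r)}|\nabla f|(x)$ in place of $w$. This is legitimate because the test function $\psi\ge 0$ furnished by \cite{W1} Lemma 1 (taken with $t=r$, $s=\frac r2$) is supported in $B(x_0,r)$ and equals $1$ on $B(x_0,\frac r2)$; hence the pointwise bound $w_r\le|\nabla f|=A$ holds almost everywhere on the whole support of $\psi$, which is all that the estimate uses.

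With these observations the chain of inequalities is identical to the one in the proof of Theorem \ref{T:1.1}: substituting $\psi$ into the weak formulation (1.2), using $w_r\psi\le A\psi$ together with the nonnegativity of $A\psi$ to enlarge the domain of integration from $B(x_0,\frac r2)$ to $B(x_0,r)$, then the weak equation, the Cauchy--Schwarz bound $\bigl|\frac{\nabla f}{|\nabla f|}\cdot\nabla\psi\bigr|\le|\nabla\psi|$, the gradient estimate $\int_{B(x_0,r)}|\nabla\psi|\,dv\le\frac{C_1}{r}\mathrm{Vol}(B(x_0,r))$ from \cite{W1} Lemma 1, and finally the doubling property (2.1), yields
$$w_r\,\mathrm{Vol}\!\left(B(x_0,\tfrac r2)\right)\le\frac{C_1}{r}\mathrm{Vol}(B(x_0,r))\le\frac{C_1D(\Omega)}{r}\mathrm{Vol}\!\left(B(x_0,\tfrac r2)\right).$$
Dividing by $\mathrm{Vol}(B(x_0,\frac r2))>0$ gives exactly (1.9).

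I expect no serious obstacle: the corollary is essentially a specialization of Theorem \ref{T:1.1}. The one place to be careful is the localization just described---resisting the temptation to quote (1.4) directly, and instead noting that the proof is entirely supported on $B(x_0,r)$, so it produces the sharper local bound on $ess \inf_{x\in B(x_0,r)}|\nabla f|$ rather than only on the global infimum.
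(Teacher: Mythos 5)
Your proof is correct, and it is in essence the paper's own argument: the paper disposes of this corollary in one line, as an instance of Theorem \ref{T:1.1} with $A(x,f,\nabla f)=|\nabla f|$. The one substantive point where you go beyond the paper is the localization issue, and you are right that it is a real one: \eqref{1.4} bounds the global quantity $w=ess\,\inf_{x\in\Omega}|A|$, which is a priori smaller than the local quantity $ess\,\inf_{x\in B(x_0,r)}|\nabla f|(x)$ appearing in \eqref{1.9}, so the corollary does not follow formally from the \emph{statement} of Theorem \ref{T:1.1}; it follows from its \emph{proof}, which, as you observe, uses the pointwise bound by the essential infimum only on the support of $\psi\subset B(x_0,r)$ and therefore yields the local estimate verbatim once $w$ is replaced by $w_r=ess\,\inf_{x\in B(x_0,r)}|\nabla f|(x)$. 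An alternative patch, closer to a literal citation, is to apply Theorem \ref{T:1.1} with the domain taken to be $B(x_0,r)$ itself (any subdomain of $\Omega$ inherits a doubling constant at most $D(\Omega)$), together with a limiting argument $r'\nearrow r$ to respect the compact support requirement on the test function. Either way, your chain of inequalities is exactly the paper's, and your proof stands; if anything, it makes explicit a step the paper's one-line deduction glosses over.
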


\begin{proof} \quad This follows
at once from Theorem \ref{T:1.1} in which $A(x, f(x), \nabla f(x))
= |\nabla f(x)|.$\end{proof}

\begin{proposition}\label{P:3.1}
Let $M$ be a complete noncompact manifold with a doubling
constant. Let $f: M \to \mathbb{R}$ be a $W^{1,1}_{loc}$ weak
subsolution of the level set formulation of the inverse mean
curvature flow \r{1.8} on $M$. Then $$\lim_{r \to \infty} ess \,
\inf_{x \in B(x_0,r)} |\nabla f|(x) = 0$$for any $x_0 \in M.$
\end{proposition}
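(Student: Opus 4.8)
The plan is to reduce Proposition 3.1 to Corollary 2.1 (the nonexistence result) by a contrapositive argument. The statement to prove is that for a weak subsolution $f$ of \r{1.8} on a complete noncompact manifold $M$ with a doubling constant, $\lim_{r\to\infty} ess\,\inf_{x\in B(x_0,r)}|\nabla f|(x) = 0$ for any $x_0 \in M$. The key observation is that equation \r{1.8} is precisely the generalized $1$-harmonic equation \r{1.1} with $A(x,f(x),\nabla f(x)) = |\nabla f(x)|$, so that $|A| = |\nabla f|$ and the essential infimum $w$ in \r{1.3} becomes $ess\,\inf_{x\in\Omega}|\nabla f|$.

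First I would invoke Corollary \ref{C:2.1}, which asserts that on such an $M$ there does not exist a $W^{1,1}_{loc}$ weak subsolution of \r{1.1} with $\lim_{r\to\infty} ess\,\inf_{x\in B(x_0,r)}|A(x,f(x),\nabla f(x))| > 0$ for some $x_0$. Since $f$ is assumed to be a weak subsolution of \r{1.8}, it is in particular a weak subsolution of \r{1.1} with the above choice of $A$, and hence the hypothesis of Corollary \ref{C:2.1} cannot be violated. Therefore the quantity $\lim_{r\to\infty} ess\,\inf_{x\in B(x_0,r)}|\nabla f|(x)$ cannot be strictly positive for any $x_0 \in M$.

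Next I would rule out the remaining possibility. The essential infimum of $|\nabla f|$ over a ball is manifestly nonnegative, so the limit superior and limit inferior are bounded below by zero. Moreover, the map $r \mapsto ess\,\inf_{x\in B(x_0,r)}|\nabla f|(x)$ is monotonically nonincreasing in $r$, since enlarging the ball can only lower an infimum; a monotone nonincreasing function bounded below by $0$ therefore has a genuine limit as $r\to\infty$, and that limit is nonnegative. Combining this with the exclusion of a strictly positive value from the previous step forces the limit to equal $0$, which is exactly the assertion.

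The main point requiring care is the logical transition from Corollary \ref{C:2.1}, which is phrased as a nonexistence statement, to the present conclusion, which asserts a limit for an \emph{existing} solution. One must note that Corollary \ref{C:2.1} does not assert that no subsolution exists, only that no subsolution exists with the strict positivity condition; hence the existence of $f$ forces its limit quantity to fail strict positivity, i.e.\ to be zero by monotonicity and nonnegativity. The only genuine verification needed is that $A(x,f(x),\nabla f(x)) = |\nabla f(x)|$ is indeed nonnegative-valued, so that Case 2 of Theorem \ref{T:1.1} (and hence the bound underlying Corollary \ref{C:2.1}) applies; this is immediate. No estimates beyond those already established in Section 2 are required, so the proof is essentially a direct citation of Corollary \ref{C:2.1} together with an elementary monotonicity remark.
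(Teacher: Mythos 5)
Your proof is correct, but it takes a slightly different route from the paper's. The paper argues directly from Corollary \ref{C:1.1}: since $f$ is a weak solution of \eqref{1.8} and $M$ has a doubling constant $D(M)$, Theorem \ref{T:1.1} with $A(x,f,\nabla f)=|\nabla f|$ gives the quantitative bound $ess\,\inf_{x\in B(x_0,r)}|\nabla f|(x)\le \frac{C_1 D(M)}{r}$ for every $x_0$ and $r$, and then letting $r\to\infty$ the squeeze between $0$ and $C_1D(M)/r$ forces the limit to equal $0$ with no further argument. You instead route through Corollary \ref{C:2.1} (the nonexistence statement) by contraposition, which is logically sound but requires the extra observation --- which you correctly supply --- that $r\mapsto ess\,\inf_{x\in B(x_0,r)}|\nabla f|(x)$ is nonincreasing and nonnegative, so that the limit genuinely exists; without this step, ``the limit is not positive'' would not by itself yield ``the limit equals zero,'' since a priori the limit might fail to exist. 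The trade-off is that the paper's direct argument is shorter and retains an explicit decay rate $O(1/r)$, whereas your argument isolates the purely qualitative content (the mere existence of a subsolution forces the limiting gradient infimum to vanish) at the cost of discarding the rate. Both approaches rest on exactly the same estimate, namely Theorem \ref{T:1.1} applied with $A=|\nabla f|\ge 0$ (Case 2), so the difference is one of logical packaging rather than of substance.
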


\begin{proof}
In view of Corollary \ref{C:1.1}, we have $ess \, \inf_{x \in
B(x_0,r)} |\nabla f|(x) \le \frac{C_1D(M)}r\, ,$ for any $x_0 \in
M\, .$  Letting $r \to \infty$ gives the desired.
\end{proof}

\begin{example}$($\cite {HI}$)$ The expanding sphere
$\partial B_{r(t)}$ of radius $r(t)$
in $\mathbb{R}^n\, ,$ where
$$r(t) = e^{\frac t{n-1}}$$ satisfies the inverse mean curvature
flow $(\frac {\partial X_t}{\partial t} = \frac {\nu}{H})$, and
its level curve formulation satisfies  $\lim_{r \to \infty} ess \,
\inf_{x \in B(x_0,r)} |\nabla f|(x) = 0$ for any $x_0 \in \mathbb{R}^n.$
\end{example}

 \begin{proposition}\label{P:1.1}
Let $M$ be a complete noncompact manifold with the global doubling
property.
Then there does not exist a $W^{1,1}_{loc}$ weak solution of the
level set formulation of the inverse mean curvature flow \r{1.8} on $M $
such that  \r{1.11} holds for some $x \in M$.
\end{proposition}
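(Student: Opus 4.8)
The plan is to argue by contradiction and reduce the statement to Proposition \ref{P:3.1}, which already carries all of the analytic content. Suppose, to the contrary, that there exists a $W^{1,1}_{loc}$ weak solution $f$ of \r{1.8} on $M$ for which \r{1.11} holds at some $x_0 \in M$, i.e. $\lim_{r \to \infty} ess \, \inf_{x \in B(x_0,r)} |\nabla f|(x) > 0$. The goal is to show that this is incompatible with the decay of $|\nabla f|$ forced by the global doubling property.

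First I would observe that the global doubling property \r{2.2} furnishes a single doubling constant $D(M)$ valid for \emph{all} balls in $M$; in particular $M$ has a doubling constant in the sense required by Corollary \ref{C:1.1} and Proposition \ref{P:3.1}. Next, a weak solution of \r{1.8} is a fortiori a weak subsolution, so Proposition \ref{P:3.1} applies verbatim and yields $\lim_{r \to \infty} ess \, \inf_{x \in B(x_0,r)} |\nabla f|(x) = 0$ for every $x_0 \in M$. This flatly contradicts the assumed \r{1.11}, and the asserted nonexistence follows. Concretely, the decay used here is exactly Corollary \ref{C:1.1} (the specialization of Theorem \ref{T:1.1} to $A = |\nabla f|$), which gives the gradient bound \r{1.9}, $ess \, \inf_{x \in B(x_0,r)} |\nabla f|(x) \le C_1 D(M)/r$; letting $r \to \infty$ drives the right-hand side to $0$. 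One could equally invoke Corollary \ref{C:2.1} directly with $A(x, f, \nabla f) = |\nabla f|$, since the essential infimum of $|A|$ is then precisely $ess \, \inf |\nabla f|$.

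I do not expect a genuine obstacle here: the substantive estimate was already established in Theorem \ref{T:1.1} through the test-function and Cauchy--Schwarz computation, and the present statement is merely its contrapositive packaged for the inverse mean curvature flow. The only points needing a line of care are verifying that a weak solution qualifies as a weak subsolution under the paper's conventions, and checking that it is the \emph{global} doubling property \r{2.2}, rather than the containment-restricted doubling \r{2.1}, that makes the bound \r{1.9} hold for all $r$ on a noncompact $M$, so that passing to the limit $r \to \infty$ is legitimate.
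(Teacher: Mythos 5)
Your proposal is correct and is essentially the paper's own argument: the paper also proceeds by contradiction, letting $r \to \infty$ in the gradient bound \r{1.9} of Corollary \ref{C:1.1} (your detour through Proposition \ref{P:3.1} is immaterial, since that proposition is itself just Corollary \ref{C:1.1} with $r \to \infty$). Your added care about solutions being subsolutions and about the global versus restricted doubling property is sound but not needed beyond what the paper implicitly assumes.
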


\begin{proof} \quad
Suppose the contrary. Letting $r \to \infty$ in Corollary
\ref{C:1.1} would lead to a contradiction.\end{proof}



\begin{corollary}\label{C:1.2}
Let $\Omega \subset M\, ,$ and $f: \Omega \to
\mathbb{R}$ be a $W^{1,1}_{loc}$ weak solution of \r{1.8}. Then \r{1.10} and \r{1.12} hold.
\end{corollary}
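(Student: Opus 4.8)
The plan is to recognize Corollary \ref{C:1.2} as the direct specialization of the Section 2 machinery to the choice $A(x, f(x), \nabla f(x)) = |\nabla f(x)|$ that defines the level set formulation \r{1.8} of the inverse mean curvature flow. The only conceptual point is to identify the abstract analytic quantity $w$ of \r{1.3} with the geometric gradient infimum appearing in \r{1.10} and \r{1.12}; once this identification is made, the three inequalities to be proved are nothing but three bounds on $w$ that are already in hand.

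First I would substitute $A(x, f(x), \nabla f(x)) = |\nabla f(x)|$ into the definition \r{1.3}. Because $|\nabla f(x)| \ge 0$, the outer absolute value is redundant, so
\[
w = ess\,\inf_{x \in \Omega} |A(x, f(x), \nabla f(x))| = ess\,\inf_{x \in \Omega} |\nabla f|(x).
\]
In other words, for the inverse mean curvature flow the nonnegative right-hand side forces $w$ to coincide exactly with the essential infimum of $|\nabla f|$ over $\Omega$, which is precisely the quantity estimated in \r{1.10} and \r{1.12}.

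Next I would invoke the bounds already established for $w$. Theorem \ref{T:1.2} gives $w \le I_{\infty}(\Omega)$, Corollary \ref{C:2.2} gives $w \le S_{\infty}(\Omega)$, and Corollary \ref{C:2.3} gives $w \le 2\sqrt{\lambda_1(\Omega)}$. Reading each through the identification of the previous paragraph, the first two inequalities are exactly \r{1.10} and the third is \r{1.12}. I would also note that none of these three results requires a doubling hypothesis, so the corollary holds under the stated assumption $\Omega \subset M$ alone.

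I do not expect any genuine obstacle. All the analytic work---the Stokes and coarea argument behind Theorem \ref{T:1.2}, the Federer--Fleming identity $S_{\infty}(\Omega) = I_{\infty}(\Omega)$ behind Corollary \ref{C:2.2}, and Cheeger's inequality $\lambda_1(\Omega) \ge \tfrac14 I_{\infty}(\Omega)^2$ behind Corollary \ref{C:2.3}---has already been carried out in Section 2. The corollary is the single observation that the inverse mean curvature flow fits the general framework with $A = |\nabla f|$, so that the Cheeger, Sobolev, and first-eigenvalue estimates for $w$ transfer verbatim into gradient bounds for the flow.
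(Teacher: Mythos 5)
Your proposal is correct and matches the paper's own proof: the paper likewise deduces Corollary \ref{C:1.2} by specializing Theorem \ref{T:1.2} to $A(x,f(x),\nabla f(x)) = |\nabla f(x)|$ (so that $w = ess\,\inf_{\Omega}|\nabla f|$) and then appealing to the Federer--Fleming identity $S_{\infty}(\Omega) = I_{\infty}(\Omega)$ together with the eigenvalue bound of Corollary \ref{C:2.3}. Your only addition is making the identification of $w$ and the non-necessity of the doubling hypothesis explicit, which is consistent with the paper's argument.
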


\begin{proof} \quad  This is an immediate consequence of Theorem \ref{T:1.2} in which $A(x, f(x), \nabla
f(x)) = |\nabla f(x)|$, and $S_{\infty}(\Omega)  =
I_{\infty}(\Omega) \, $.\end{proof}


\begin{theorem}\label{T:3.1}
Let $\Omega$ be a domain in a Cartan-Hadamard manifold $M$ with
sectional curvature $Sec^M \le  - a^2\, ,$ where $a > 0\, .$ Then
there does not exist a $W^{1,1}_{loc}$ weak subsolution $f: \Omega
\to \mathbb{R}$  of \r{1.8}  with $w = ess \, \inf_{x\in \Omega}
|\nabla f(x)| > I_{\infty}(\Omega) > 0.$

\end{theorem}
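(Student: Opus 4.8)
The plan is to reduce the statement to Theorem~\ref{T:1.2} (equivalently Corollary~\ref{C:1.2}) applied to the inverse mean curvature flow, for which $A(x,f(x),\nabla f(x)) = |\nabla f(x)|$, so that the quantity in \r{1.3} is precisely $w = ess\,\inf_{x\in\Omega}|\nabla f(x)|$. I would argue by contradiction: suppose a $W^{1,1}_{loc}$ weak subsolution $f$ of \r{1.8} on $\Omega$ existed with $w > I_\infty(\Omega) > 0$. The inequality \r{1.5} of Theorem~\ref{T:1.2} forces $w \le I_\infty(\Omega)$, which contradicts the hypothesis $w > I_\infty(\Omega)$; hence no such $f$ can exist. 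Thus the entire proof hinges on securing the bound $w \le I_\infty(\Omega)$ in the \emph{subsolution} case, the solution case being exactly Corollary~\ref{C:1.2}.

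To secure it I would rerun the integral estimate in the proof of Theorem~\ref{T:1.2}, watching signs. Since $A = |\nabla f|\ge 0$, for any $\Omega'\subset\subset\Omega$ we have $w\,{\rm Vol}(\Omega') \le \int_{\Omega'}|\nabla f|\,dv = \int_{\Omega'} A\,dv$. Reading ``weak subsolution'' of \r{1.8} in the subharmonic-type direction for the $1$-Laplacian, namely $\operatorname{div}\!\big(\tfrac{\nabla f}{|\nabla f|}\big)\ge |\nabla f|$ weakly, and testing this inequality against a nonnegative cutoff $\psi$ of the type in \cite{W1} Lemma~1 (rather than literally against the indicator of $\Omega'$, which $f\in W^{1,1}_{loc}$ does not permit), one obtains, via the pointwise bound $\big|\tfrac{\nabla f}{|\nabla f|}\cdot\nu\big|\le 1$, the chain $\int_{\Omega'} A\,dv \le \int_{\partial\Omega'}\tfrac{\nabla f}{|\nabla f|}\cdot\nu\,dS \le {\rm Vol}(\partial\Omega')$. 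Combining gives $w\,{\rm Vol}(\Omega') \le {\rm Vol}(\partial\Omega')$ for every $\Omega'\subset\subset\Omega$, whence $w \le I_\infty(\Omega)$ by the definition \r{2.3} of the Cheeger constant.

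The Cartan--Hadamard hypothesis $Sec^M \le -a^2$ is what renders the conclusion non-vacuous: by comparison with hyperbolic space (McKean's lower bound), such a manifold satisfies $I_\infty(M)\ge (n-1)a > 0$, and since every $\Omega'\subset\subset\Omega\subset M$ is a fortiori compactly contained in $M$, the supremum defining $I_\infty(\Omega)$ in \r{2.3} runs over a subfamily of that for $I_\infty(M)$, so $I_\infty(\Omega)\ge I_\infty(M) \ge (n-1)a>0$. This supplies the positive threshold $I_\infty(\Omega)$ below which $w$ is forced to lie, giving the nonexistence assertion genuine content (in $\mathbb{R}^n$, by contrast, $I_\infty=0$ and $w=0$, leaving nothing to exclude). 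The main obstacle is precisely the sign-and-regularity bookkeeping of the previous paragraph: one must confirm that the direction of the defining inequality for a weak subsolution, together with the nonnegativity of $A=|\nabla f|$, aligns so that the resulting one-sided estimate still delivers $w\,{\rm Vol}(\Omega')\le{\rm Vol}(\partial\Omega')$, exactly as in the nonnegative-valued case of Theorem~\ref{T:1.1} and in Corollaries~\ref{C:2.1} and \ref{C:1.2}.
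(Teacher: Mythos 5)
Your proposal is correct and takes essentially the same route as the paper: Theorem \ref{T:3.1} is an immediate consequence of the Cheeger-constant bound $w \le I_{\infty}(\Omega)$ of Theorem \ref{T:1.2} (equivalently Corollary \ref{C:1.2}, with $A = |\nabla f|$), the curvature hypothesis entering only through the Hessian comparison estimate $I_{\infty}(\Omega) \ge (n-1)a > 0$, which makes the nonexistence statement non-vacuous. Your sign verification for the subsolution case --- the Stokes identity degrades to a one-sided inequality that still yields $w\,{\rm Vol}(\Omega') \le {\rm Vol}(\partial \Omega')$ because $A = |\nabla f| \ge 0$ --- is precisely the check the paper leaves implicit.
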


\begin{proposition}\label{P:3.2}
Let $M$ be a complete noncompact manifold with $p$th-power volume
growth, $p \ge 0$. Let $f: M \to \mathbb{R}$ be a $W^{1,1}_{loc}$
weak subsolution of the level set formulation of the inverse mean
curvature flow \r{1.8} on $M$. Then
\begin{equation}\label{3.1} ess \, \inf_{x \in M} |\nabla f|(x) = 0\,.\end{equation}
\end{proposition}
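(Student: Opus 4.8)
The plan is to read off \eqref{3.1} as the conclusion $w=0$ of Proposition \ref{P:2.1}, specialized to the inverse mean curvature flow. For \eqref{1.8} the right-hand side is $A(x,f(x),\nabla f(x)) = |\nabla f(x)|$, so the quantity $w$ of \eqref{1.3} is exactly $w = ess\,\inf_{x\in M}|\nabla f|(x)$, and \eqref{3.1} says precisely $w=0$. Since $M$ is assumed to have $p$th-power volume growth, the final assertion of Proposition \ref{P:2.1} gives $w=0$ immediately. Thus the whole statement is an instance of that proposition, and the only thing requiring care is that here $f$ is a weak \emph{sub}solution, whereas Proposition \ref{P:2.1} was derived from \eqref{1.6}, which in turn came from Theorem \ref{T:1.2} for weak solutions.

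Accordingly, the first real step is to check that \eqref{1.6} persists for subsolutions. Repeating the argument in the proof of Theorem \ref{T:1.2} for a domain $\Omega'\subset\subset M$, the divergence theorem applied to the subsolution inequality $\operatorname{div}(\nabla f/|\nabla f|)\ge |\nabla f|$ gives $\int_{\partial\Omega'}\frac{\nabla f}{|\nabla f|}\cdot\nu\,dS = \int_{\Omega'}\operatorname{div}(\nabla f/|\nabla f|)\,dx \ge \int_{\Omega'}|\nabla f|\,dx \ge w\,{\rm Vol}(\Omega')$, while the Cauchy--Schwarz inequality bounds the boundary integral above by $\int_{\partial\Omega'}1\,dS = {\rm Vol}(\partial\Omega')$. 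Hence $w\,{\rm Vol}(\Omega')\le {\rm Vol}(\partial\Omega')$ still holds, and taking $\Omega' = B(x_0,r)$ together with the coarea formula yields \eqref{1.6} for almost every $r$. The point is that weakening ``$=$'' to the subsolution ``$\ge$'' only improves the chain of inequalities in the direction we need.

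With \eqref{1.6} available, the remaining steps are exactly those of Proposition \ref{P:2.1}: integrating $w \le \frac{d}{dr}\log{\rm Vol}(B(x_0,r))$ over $[r_0,r_1]$ and exponentiating produces $\liminf_{r\to\infty} e^{-wr}{\rm Vol}(B(x_0,r)) \ge K > 0$ for a positive constant $K$. Finally, invoking the hypothesis ${\rm Vol}(B(x_0,r)) = O(r^p)$, if $w$ were positive then $e^{-wr}{\rm Vol}(B(x_0,r)) \le C\,e^{-wr}r^p \to 0$, contradicting the strictly positive liminf; therefore $w=0$, which is \eqref{3.1}. I expect the only genuine obstacle to be the bookkeeping in the second step --- verifying that the divergence theorem for the subsolution supplies an inequality pointing the correct way --- since every other step is a verbatim application of Proposition \ref{P:2.1}.
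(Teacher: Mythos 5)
Your proof is correct and takes essentially the same route as the paper: the paper's own proof is a one-line reduction to the last assertion of Proposition \ref{P:2.1} (its citation of Bishop--Gromov serves only the ``in particular'' addendum about nonnegative Ricci curvature, which is not part of the statement you proved). Your additional verification that \eqref{1.6} persists when ``weak solution'' is weakened to ``weak subsolution'' --- valid precisely because $A=|\nabla f|\ge 0$ makes the subsolution inequality point the right way --- fills a gap the paper passes over silently, since Proposition \ref{P:2.1} and Theorem \ref{T:1.2} are stated for solutions while Proposition \ref{P:3.2} is applied to subsolutions.
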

In particular, if $M$ is a complete manifold of finite volume, or of nonnegative Ricci curvature, then \r{3.1} holds.

\begin{proof}
This follows at once from the last assertion of Proposition \ref{P:2.1}, and the Bishop-Gromov volume comparison theorem.
\end{proof}

\section{Liouville Theorem for
$p$-subharmonic functions of constant $p$-tension field}

\begin{theorem}\label{T:1.4}
Let $n \le p\, ,$ and $f\in W^{1,p}_{loc} (\mathbb{R}^n) \cap
C(\mathbb{R}^n)$ be bounded below. If $f$ is a weak $\emph
{subsolution}$ of the $p$-Laplace equation with constant $p$-tension
field, i.e., $div(|\nabla f|^{p-2}\nabla f) = c, \,$ and with
bounded weak derivative $($i.e., $|\nabla f| \le C_2$ for some
constant $C_2
> 0$$)$, then $f$ is constant.
\end{theorem}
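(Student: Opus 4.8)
The plan is to follow the method of Theorem \ref{T:1.1} and Corollary \ref{C:2.1}, now applied to the $p$-Laplace operator, and to combine it with the $p$-parabolicity of $\mathbb{R}^n$ that holds precisely in the range $n\le p$. Throughout write $X=|\nabla f|^{p-2}\nabla f$, so that the hypothesis reads $\operatorname{div} X\ge c$ weakly, i.e.
\[
\int_{\mathbb{R}^n}|\nabla f|^{p-2}\nabla f\cdot\nabla\psi\,dv\le -c\int_{\mathbb{R}^n}\psi\,dv
\qquad\text{for every }\psi\in C_0^{\infty}(\mathbb{R}^n),\ \psi\ge0.
\]
Since $f$ is bounded below we may assume $\inf f=0$ after subtracting a constant, which changes neither $\nabla f$ nor the equation; the goal is then to show $\nabla f\equiv0$.

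First I would pin down the sign of the constant $c$, exactly as in the proof of Theorem \ref{T:1.1}. Choosing $\psi\ge0$ to be the cut-off of \cite{W1} Lemma 1 with $t=r,\ s=\tfrac r2$ (so $\psi\equiv1$ on $B(x_0,\tfrac r2)$, $\operatorname{supp}\psi\subset B(x_0,r)$ and $|\nabla\psi|\le C_1/r$), substituting it into the weak inequality, and using $|\nabla f|\le C_2$ together with the global doubling property of $\mathbb{R}^n$, one obtains, in the case $c\ge0$, the chain
\[
c\,{\rm Vol}(B(x_0,\tfrac r2))\le -\int_{\mathbb{R}^n}X\cdot\nabla\psi\,dv
\le C_2^{\,p-1}\frac{C_1}{r}{\rm Vol}(B(x_0,r))
\le\frac{C_2^{\,p-1}C_1D(\mathbb{R}^n)}{r}\,{\rm Vol}(B(x_0,\tfrac r2)).
\]
Letting $r\to\infty$ forces $c\le0$; this is precisely Corollary \ref{C:2.1} with the analytic quantity $w=|c|$ controlled by the geometry, the factor $C_2^{\,p-1}$ replacing the bound $|\nabla f/|\nabla f||=1$ used in the $1$-harmonic case. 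Note that the same computation applied to a genuine \emph{solution} of $\operatorname{div} X=c$ gives $|c|=0$, and it is this vanishing of the $p$-tension field that I would want to exploit next.

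With the $p$-tension field controlled I would run a Caccioppoli estimate adapted to the bounded-below function $f\ge0$, testing against $\phi=v_k\psi_R^{\,p}\ge0$, where $v_k=\min(f,k)$ truncates $f$ from above (this truncation is forced because $f$ is bounded only from below) and $\psi_R$ is a $p$-capacity cut-off: $\psi_R\equiv1$ on $B(x_0,R)$ with $\int_{\mathbb{R}^n}|\nabla\psi_R|^p\,dv\to0$ as $R\to\infty$. Such cut-offs exist exactly because $n\le p$: for $p>n$ one takes $|\nabla\psi_R|\le C/R$ supported in $B(x_0,2R)$, so $\int|\nabla\psi_R|^p\le CR^{\,n-p}\to0$, while for $p=n$ the logarithmic cut-off on $B(x_0,R^2)\setminus B(x_0,R)$ gives $\int|\nabla\psi_R|^n\le C(\log R)^{\,1-n}\to0$. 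Expanding $\nabla\phi$, the principal term produces $\int_{\{f<k\}}|\nabla f|^p\psi_R^{\,p}\,dv$, the interior cross term is absorbed by Young's inequality at the cost of $C\int v_k^{\,p}|\nabla\psi_R|^p\,dv\le Ck^p\int|\nabla\psi_R|^p\,dv\to0$, and one is left to dispose of the source term and the contribution of the truncation set $\{f\ge k\}$. Passing first to $R\to\infty$ and then $k\to\infty$ by monotone convergence should yield $\int_{\mathbb{R}^n}|\nabla f|^p\,dv=0$, whence $\nabla f\equiv0$ and $f$ is constant.

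The step I expect to be the main obstacle is the treatment of the source term $-c\int v_k\psi_R^{\,p}\,dv$ and of the boundary contribution from $\{f\ge k\}$. The former carries the \emph{wrong} sign for a mere subsolution and scales like ${\rm Vol}(B(x_0,2R))$ rather than like $\int|\nabla\psi_R|^p$, so it cannot be made small by the capacity cut-off alone; controlling it is precisely where the bounded gradient hypothesis must be used to upgrade the one-sided bound $c\le0$ into genuine cancellation (in effect, to reduce to the vanishing $|c|=0$ of the first step). The truncation term is delicate for the same reason, since the at-most-linear growth of $f$ forced by $|\nabla f|\le C_2$ competes against the slow decay of $\int|\nabla\psi_R|^p$. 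Verifying that the hypotheses $|\nabla f|\le C_2$ and $n\le p$ together defeat both terms before passing to the limit is the heart of the argument; the remaining manipulations are the routine $p$-Laplacian analogues of the estimates in the proofs of Theorems \ref{T:1.1} and \ref{T:1.2}.
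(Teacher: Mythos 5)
Your first step coincides with the paper's: test against the cut-off $\psi$ of \cite{W1} Lemma 1 with $t=r$, $s=\frac r2$, bound the flux by $C_2^{p-1}|\nabla\psi|$, and let $r\to\infty$ using the volume ratio of concentric balls in $\mathbb{R}^n$. But you read the hypothesis as the one-sided inequality $\operatorname{div}(|\nabla f|^{p-2}\nabla f)\ge c$ and accordingly conclude only $c\le 0$. Under that reading the theorem is false, so no completion of your argument is possible: for $n=p=2$ the function $f(x)=\log(1+|x|^2)$ is smooth, bounded below, has $|\nabla f|\le 1$, and satisfies $\Delta f=4(1+|x|^2)^{-2}\ge 0$, i.e.\ it is a nonconstant weak subsolution with $c=0$. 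The statement is to be read, and the paper's proof reads it, with the constant-$p$-tension-field \emph{equation} holding weakly, $\int|\nabla f|^{p-2}\nabla f\cdot\nabla\psi\,dx=-c\int\psi\,dx$ (``subsolution'' refers to the $p$-Laplace equation, i.e.\ to the sign of $c$); then the same cut-off computation gives the two-sided bound $|c|\,{\rm Vol}(B(x_0,\frac r2))\le\frac{C_1C_2^{p-1}}{r}{\rm Vol}(B(x_0,r))$, hence $c=0$ exactly, not merely $c\le 0$.

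The main gap is the second half. Once $c=0$, the paper makes no further estimate: $f$ is then a weak solution of $\operatorname{div}(|\nabla f|^{p-2}\nabla f)=0$, in particular a supersolution bounded below, and constancy is precisely Theorem 1.1 of \cite{BD} (quoted as Theorem \ref{T:4.1}), which the paper invokes to finish. Your substitute --- a Caccioppoli estimate with truncations $v_k=\min(f,k)$ and $p$-capacity cut-offs --- is left incomplete by your own admission, and the obstruction you flag is real, not a technicality: on $\{f\ge k\}$ the test function $v_k\psi_R^p$ is nonzero while $\nabla v_k$ vanishes, so the cross term cannot be absorbed into $\int_{\{f<k\}}\psi_R^p|\nabla f|^p\,dx$; estimating it by H\"older using only $|\nabla f|\le C_2$ gives $k\,C_2^{p-1}{\rm Vol}({\rm supp}\,\psi_R)^{(p-1)/p}\bigl(\int|\nabla\psi_R|^p\,dx\bigr)^{1/p}$, which for $p>n$ is of order $kR^{n-1}$, and reversing the order of the limits in $R$ and $k$ fares no better since $f$ may grow linearly. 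Indeed no such argument can close, because your computation uses nothing beyond the subsolution property, and for subsolutions the conclusion fails by the example above. A self-contained finish would require reproving the Liouville theorem of \cite{BD} for supersolutions (e.g.\ via the logarithmic Caccioppoli inequality for positive $p$-supersolutions together with the $p$-parabolicity of $\mathbb{R}^n$ when $p\ge n$); citing Theorem \ref{T:4.1}, as the paper does, is the intended and correct conclusion of the proof.
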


\begin{proof}\quad
Let $\psi \ge 0$ be as in \cite {W1} Lemma 1 in which $M =
\mathbb{R}^n, t=r, s= \frac r2$. Choose $\psi$ to be a test
function  in the distribution sense of the constant $p$-tension
field equation. Then via the Cauchy-Schwarz inequality we have

\begin{align*}\begin{aligned}\notag & \int _{B(x_0,\frac r2)}
|c| \psi(x) dx  \le \int _{B(x_0,r)} |c| \psi(x) dx  \\&\hskip.3in = \bigg|
\int _{B(x_0,r)} |\nabla f|^{p-2}{\nabla f}\cdot \nabla \psi dx\bigg|
\le  C_2^{p-1} \int _{B(x_0,r)} |\nabla \psi| dx
.\end{aligned}\end{align*} Hence,
\[ |c| {\rm Vol}\left(B\left(x_0,\frac r2\right)\right) \le \frac{C_1 C_2^{p-1}}{r}{\rm Vol}
(B(x_0,r))\] implies that $c =0$ on letting $r \to \infty\, .$ Now
the assertion follows from Theorem 1.1 in \cite {BD}.\end{proof}

\begin{theorem}\label{T:1.5}
Let $n \le p\, .$ If $f\in W^{1,p} (\mathbb{R}^n) \cap
C(\mathbb{R}^n)$ is bounded below, and is a weak $\emph
{subsolution}$ of the $p$-Laplace equation with constant $p$-tension
field, then $f$ is constant.
\end{theorem}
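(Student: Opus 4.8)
The plan is to reduce Theorem 4.2 to Theorem 4.1 by showing that membership in $W^{1,p}(\mathbb{R}^n)$ automatically forces the gradient bound $|\nabla f| \le C_2$ needed to invoke the previous theorem, or alternatively to rerun the proof of Theorem 4.1 using the global integrability of $|\nabla f|^p$ in place of the pointwise bound. Since the hypotheses of Theorem 4.2 are identical to those of Theorem 4.1 except that the assumption $|\nabla f| \le C_2$ has been dropped and $f \in W^{1,p}_{loc}$ has been strengthened to $f \in W^{1,p}(\mathbb{R}^n)$ globally, the natural strategy is to re-examine where the gradient bound was actually used in the proof of Theorem 4.1.

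First I would recall that in the proof of Theorem 4.1 the bound $|\nabla f| \le C_2$ entered exactly once, to estimate
\[
\bigg| \int_{B(x_0,r)} |\nabla f|^{p-2} \nabla f \cdot \nabla \psi \, dx \bigg| \le C_2^{p-1} \int_{B(x_0,r)} |\nabla \psi| \, dx.
\]
So the key step is to produce an analogous bound on $\big| \int_{B(x_0,r)} |\nabla f|^{p-2} \nabla f \cdot \nabla \psi \, dx \big|$ using only $\nabla f \in L^p(\mathbb{R}^n)$. The natural tool here is Hölder's inequality with exponents $p$ and $p' = p/(p-1)$: since $|\nabla f|^{p-2}\nabla f$ has modulus $|\nabla f|^{p-1}$, it lies in $L^{p'}(\mathbb{R}^n)$ with norm $\big(\int |\nabla f|^p\big)^{1/p'}$, giving
\[
\bigg| \int_{B(x_0,r)} |\nabla f|^{p-2} \nabla f \cdot \nabla \psi \, dx \bigg| \le \bigg( \int_{B(x_0,r)} |\nabla f|^p \, dx \bigg)^{1/p'} \bigg( \int_{B(x_0,r)} |\nabla \psi|^p \, dx \bigg)^{1/p}.
\]
Then I would combine this with the estimate on $\psi$ and $\nabla \psi$ from \cite{W1} Lemma 1 (with $t=r$, $s=r/2$), which controls $\int |\nabla \psi|^p$ by something like $C_1^p \, r^{-p}\,{\rm Vol}(B(x_0,r))$, and use the tail decay of $\int_{B(x_0,r)} |\nabla f|^p \, dx$ as $r \to \infty$ (which tends to $0$, or at worst stays bounded by $\|\nabla f\|_{L^p}^p$, since the integrand is globally integrable) to force $|c| \, {\rm Vol}(B(x_0, r/2)) \to 0$ in a suitable sense, hence $c = 0$. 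Once $c = 0$ is established, the conclusion that $f$ is constant again follows from Theorem 1.1 in \cite{BD}.

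The main obstacle I anticipate is the interplay between the growth of ${\rm Vol}(B(x_0,r))$ and the decay of the tail $\int_{B(x_0,r)} |\nabla f|^p \, dx$: in $\mathbb{R}^n$ with $p \ge n$, the volume factor $r^{-p}\,{\rm Vol}(B(x_0,r))$ behaves like $r^{\,n-p}$, which is bounded (and tends to $0$ when $p > n$), so the critical case is $p = n$, where this factor is $O(1)$ and one must rely on $\int_{B(x_0,r)} |\nabla f|^p \, dx \to 0$ as $r \to \infty$ to kill the right-hand side. Verifying that this product genuinely forces $c = 0$ — rather than merely a bounded estimate — is the delicate point, and it is precisely here that the global (rather than merely local) membership $f \in W^{1,p}(\mathbb{R}^n)$ is essential, since it guarantees $\int_{B(x_0,r)} |\nabla f|^p \, dx$ converges to $\|\nabla f\|_{L^p}^p$ and its complement tail vanishes.
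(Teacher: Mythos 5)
Your plan follows the paper's own proof: H\"older's inequality with exponents $\frac{p}{p-1}$ and $p$ applied to $|\nabla f|^{p-2}\nabla f\cdot\nabla\psi$, the observation that $f\in W^{1,p}(\mathbb{R}^n)$ makes the first factor a constant $C_3 \ge \big(\int_{\mathbb{R}^n}|\nabla f|^p dx\big)^{\frac{p-1}{p}}$ independent of $r$, the cut-off estimate of \cite{W1} Lemma 1, letting $r\to\infty$ to conclude $c=0$, and finally the Liouville theorem of \cite{BD} exactly as in Theorem \ref{T:1.4}. So the skeleton is the right one and matches the paper.

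However, the one step you flag as ``the delicate point'' is resolved incorrectly, and this is a genuine (though easily repaired) gap. First, $\int_{B(x_0,r)}|\nabla f|^p\,dx$ does \emph{not} tend to $0$ as $r\to\infty$; it increases monotonically to $\|\nabla f\|_{L^p(\mathbb{R}^n)}^p$. What vanishes is the integral over the complement $\mathbb{R}^n\setminus B(x_0,r)$, which never appears in your estimate, since your H\"older factor is taken over the ball itself. Second, no such decay is needed, and the case $p=n$ is not critical: boundedness of the H\"older factor by the global norm is enough, because the left-hand side of the inequality carries the factor ${\rm Vol}(B(x_0,\frac r2))=\omega_n(r/2)^n$, which grows like $r^n$, while the right-hand side is at most
\[
C_3\cdot\frac{C_1}{r}\,{\rm Vol}(B(x_0,r))^{\frac 1p}=O\big(r^{\frac np-1}\big).
\]
Dividing by ${\rm Vol}(B(x_0,\frac r2))$ gives $|c|=O\big(r^{\frac np-1-n}\big)\to 0$, hence $c=0$; note that $\frac np-1-n<0$ for \emph{every} $p\ge 1$, so the hypothesis $p\ge n$ plays no role in obtaining $c=0$ --- it is needed only to invoke Theorem 1.1 of \cite{BD} once $c=0$ is known. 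Your impression that $p=n$ forces one to rely on a (false) decay of the Dirichlet integral comes from comparing the right-hand side with a constant rather than with the growing volume factor on the left.
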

\begin{proof}\quad Proceeding as in the
proof of Theorem \ref{T:1.4}, we have via the H\"older inequality

\begin{align*}\begin{aligned}\notag & \int _{B(x_0,\frac r2)}
|c| \psi(x) dx  \le \bigg| \int _{B(x_0,r)} |\nabla f|^{p-2}{\nabla
f}\cdot \nabla \psi dx\bigg|  \\&\hskip.3in \le  (\int _{\mathbb{R}^n}
|\nabla f|^p dx)^{\frac {p-1}{p}} \cdot (\int _{B(x_0,r)} |\nabla
\psi|^p dx)^{\frac 1p}.\end{aligned}\end{align*} Hence,
\[ |c| {\rm Vol}\left(B\left(x_0,\frac r2\right)\right) \le \frac{C_1 C_3}{r}{\rm Vol}
(B(x_0,r))\] where $C_3 \ge (\int _{\mathbb{R}^n} |\nabla f|^p
dx)^{\frac {p-1}{p}}$ is a constant independent of $r\, $(here we
use the assumption $f \in W^{1,p}(\mathbb{R}^n)$). Hence $c =0$ on
letting $r \to \infty\, ,$ and the assertion follows.\end{proof}

Theorems \ref{T:1.4} and \ref{T:1.5} also augment Theorem 1.1 in
\cite {BD}:\smallskip

\begin{theorem}\label{T:4.1} Let $n \le p\, .$ If $u\in
W^{1,p}_{loc} (\mathbb{R}^n) \cap C(\mathbb{R}^n)$ is a weak
supersolution of
$$\operatorname{div}(|\nabla f|^{p-2}\nabla f) = 0  \ \ in \ \ \mathbb{R}^n$$ and  is
bounded below then $u$ is constant.
\end{theorem}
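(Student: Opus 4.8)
The plan is to establish the Liouville property by a logarithmic Caccioppoli (energy) estimate, exploiting the fact that $\mathbb{R}^n$ supports no nonconstant bounded-below $p$-superharmonic function precisely when $p\ge n$, i.e. that $\mathbb{R}^n$ is $p$-parabolic. Since the operator $\operatorname{div}(|\nabla f|^{p-2}\nabla f)$ depends only on $\nabla f$, I may replace $u$ by $u-\inf_{\mathbb{R}^n}u$ and assume $u\ge 0$ (the supersolution property is preserved because $|\nabla(u-c)|^{p-2}\nabla(u-c)=|\nabla u|^{p-2}\nabla u$). By definition $u$ being a weak supersolution means $\int_{\mathbb{R}^n}|\nabla u|^{p-2}\nabla u\cdot\nabla\psi\,dx\ge 0$ for every $\psi\in C_0^\infty(\mathbb{R}^n)$ with $\psi\ge 0$.

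First I would fix $\varepsilon>0$ and a cutoff $\eta$ with $0\le\eta\le 1$, and insert the nonnegative test function $\psi=\eta^p(u+\varepsilon)^{1-p}$ (legitimate after a routine upper truncation $u\mapsto\min(u,k)$ and passage to the limit, since $u+\varepsilon\ge\varepsilon>0$ makes $(u+\varepsilon)^{1-p}$ bounded and $\psi\in W^{1,p}$ with compact support). Writing $v=\log(u+\varepsilon)$, so that $\nabla v=\nabla u/(u+\varepsilon)$ and $(u+\varepsilon)^{1-p}|\nabla u|^{p-1}=|\nabla v|^{p-1}$, the supersolution inequality becomes, after expanding $\nabla\psi$,
\[
(p-1)\int_{\mathbb{R}^n}\eta^p|\nabla v|^p\,dx\le p\int_{\mathbb{R}^n}\eta^{p-1}|\nabla v|^{p-1}|\nabla\eta|\,dx .
\]
A weighted Young's inequality (exponents $p/(p-1)$ and $p$) absorbs the right-hand gradient term into the left, yielding the key estimate
\[
\int_{\mathbb{R}^n}\eta^p|\nabla v|^p\,dx\le C(p)\int_{\mathbb{R}^n}|\nabla\eta|^p\,dx .
\]

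Next I would choose the cutoff to send the right-hand side to zero, which is exactly where $p\ge n$ enters. When $p>n$, take $\eta=1$ on $B(x_0,R)$, $\eta=0$ off $B(x_0,2R)$ with $|\nabla\eta|\le C/R$, so $\int|\nabla\eta|^p\,dx\le C R^{\,n-p}\to 0$ as $R\to\infty$. In the borderline case $p=n$ the linear cutoff fails, and one must instead use the logarithmic (capacitary) cutoff $\eta=1$ on $B(x_0,R)$, $\eta(x)=\log(R^2/|x-x_0|)/\log R$ on $R\le|x-x_0|\le R^2$, and $\eta=0$ beyond, for which $\int|\nabla\eta|^n\,dx=\omega_{n-1}(\log R)^{1-n}\to 0$. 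In either case, letting $R\to\infty$ forces $\int_{\mathbb{R}^n}|\nabla v|^p\,dx=0$, hence $\nabla v\equiv 0$, so $v$ and therefore $u=e^{v}-\varepsilon$ is constant.

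I expect the main obstacle to be the borderline exponent $p=n$: the scaling $\int|\nabla\eta|^p\sim R^{\,n-p}$ decays only when $p>n$, so the critical case genuinely requires the logarithmic cutoff together with the fact that the $n$-capacity of a ball in $\mathbb{R}^n$ relative to a large concentric ball tends to $0$. A secondary technical point is the justification of the singular test function $\eta^p(u+\varepsilon)^{1-p}$: because $u$ is only $W^{1,p}_{loc}$ and may be unbounded above, one should first work with $\min(u,k)$, establish the estimate, and then let $k\to\infty$, using the boundedness of $(u+\varepsilon)^{1-p}$ on $u\ge 0$.
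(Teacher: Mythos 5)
Your proof is correct, but there is nothing in the paper to compare it against: the paper does not prove this statement at all. It is quoted verbatim from Birindelli and Demengel \cite{BD} (their Theorem 1.1) and invoked as a black box to finish the proof of Theorem \ref{T:1.4} (after showing $c=0$, the paper simply cites this result to conclude). What you have supplied is therefore a self-contained proof of the cited ingredient, and it is the classical one showing that $\mathbb{R}^n$ is $p$-parabolic for $p\ge n$: reduce to $u\ge 0$, test the supersolution inequality with $\psi=\eta^p(u+\varepsilon)^{1-p}$, pass to $v=\log(u+\varepsilon)$ to get $(p-1)\int\eta^p|\nabla v|^p\,dx\le p\int\eta^{p-1}|\nabla v|^{p-1}|\nabla\eta|\,dx$, absorb via Young's inequality to reach $\int\eta^p|\nabla v|^p\,dx\le C(p)\int|\nabla\eta|^p\,dx$, and then make the right-hand side vanish --- linear cutoff (of size $CR^{n-p}$) for $p>n$, logarithmic cutoff (of size $\omega_{n-1}(\log R)^{1-n}$) in the borderline case $p=n$. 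Every step checks: the sign convention for weak supersolutions is the right one, the absorption is legitimate because $\int\eta^p|\nabla v|^p\,dx$ is finite (as $|\nabla v|\le\varepsilon^{-1}|\nabla u|\in L^{p}_{loc}$ and $\eta$ has compact support), both cutoff computations are exact, and $\nabla v\equiv 0$ together with continuity of $u$ gives constancy. Two minor remarks. First, the upper truncation $\min(u,k)$ you flag as a necessary precaution is actually superfluous: since $u\ge 0$, one has $(u+\varepsilon)^{1-p}\le\varepsilon^{1-p}$ and $(u+\varepsilon)^{-p}|\nabla u|\le\varepsilon^{-p}|\nabla u|\in L^{p}_{loc}$, so $\psi$ is already an admissible compactly supported $W^{1,p}$ test function by mollification (which preserves nonnegativity). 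Second, your absorption step requires $p-1>0$; this is implicit in the notion of the $p$-Laplace equation, but note that for $n=p=1$ the statement as literally written is false ($u(x)=e^x$ is a nonconstant, bounded-below entire solution of $(\operatorname{sgn}u')'=0$), so your proof covers precisely the meaningful range: $p\ge n\ge 2$, and $p>n$ when $n=1$.
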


As a further application of Theorem \ref{T:1.1} for $p$-subharmonic
functions of constant $p$-tension field for $p=1$, we have
\begin{proposition}\label{P:4.1} Let $M$ be a complete noncompact Riemannian manifold
with a doubling constant $D(M)$, and $f: M \to \mathbb{R}$ be a
$W^{1,1}_{loc}$ weak subsolution of the $1$-harmonic equation
\begin{equation}\label{4.1}  \operatorname{div} \bigg(\frac {\nabla g}
 { |\nabla g|}\bigg) = 0\, \ \ on \ \ M,\end{equation}
 with constant $1$-tension field $c\, ,$ i.e.,  \begin{equation}\label{4.2} \operatorname{div}
\bigg(\frac {\nabla f} { |\nabla f|}\bigg)= c\, \end{equation} in the
distribution sense. Then $f$ is a $1$-harmonic function.
\end{proposition}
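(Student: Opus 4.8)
The plan is to apply Theorem~\ref{T:1.1} with the specific choice $A(x, f(x), \nabla f(x)) \equiv c$, the constant $1$-tension field, and deduce that the constant must in fact vanish. Since $f$ is a weak subsolution of \r{4.2} in the distribution sense, it is a weak solution of the generalized $1$-harmonic equation \r{1.1} with this constant $A$. The essential infimum $w$ of $|A|$ over $M$ is then simply $|c|$, because the absolute value of a constant is constant and its essential infimum equals itself.

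With this identification, I would invoke the conclusion \r{1.4} of Theorem~\ref{T:1.1}: for every ball $B(x_0,r) \subset M$ we have $0 \le w \le \frac{C_1 D(M)}{r}$. Here the global doubling property of $M$ furnishes a single doubling constant $D(M)$ valid for all balls, and since $M$ is complete and noncompact, balls $B(x_0,r)$ of arbitrarily large radius $r$ are available (and contained in $M = \Omega$). Substituting $w = |c|$, I obtain $0 \le |c| \le \frac{C_1 D(M)}{r}$ for all $r > 0$.

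Letting $r \to \infty$ in this inequality forces $|c| = 0$, hence $c = 0$. But $c = 0$ means precisely that $\operatorname{div}\big(\frac{\nabla f}{|\nabla f|}\big) = 0$ in the distribution sense, i.e.\ $f$ is a weak solution of the $1$-harmonic equation \r{4.1}, which is the desired conclusion. This is exactly the mechanism already exploited in Corollary~\ref{C:2.1}, specialized to $A \equiv c$.

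I do not anticipate a serious obstacle here, as the argument is a direct specialization of Theorem~\ref{T:1.1} followed by sending $r \to \infty$. The one point warranting a word of care is confirming that a constant $A \equiv c$ falls under the hypotheses of Theorem~\ref{T:1.1}: the theorem allows $A$ to be nonnegative-valued or nonpositive-valued (Case~2 in its proof), and a constant is of this form, so the application is legitimate and no appeal to the intermediate-value (Case~1) branch is needed. The global doubling property is what lets $r$ range to infinity with a fixed constant $D(M)$, and that is the only structural assumption doing real work.
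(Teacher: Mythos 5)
Your proof is correct and follows essentially the same route as the paper: the paper's proof also invokes Theorem~\ref{T:1.1} with $A(x,f,\nabla f)\equiv c$, so that $w=|c|\le C_1 D(M)/r$ for all $r>0$, and lets $r\to\infty$ (the mechanism made explicit in Corollary~\ref{C:2.1}) to force $c=0$. Your added care in checking that a constant $A$ falls under the nonnegative-or-nonpositive branch of Theorem~\ref{T:1.1} is a sound, if implicit in the paper, verification.
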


\begin{proof} This follows at once from Theorem \ref{T:1.1} in
which $A(x, f, \nabla f) \equiv c$.
\end{proof}

This generalizes the case $M = \mathbb{R}^n$ of \cite {W1}.

\section{Generalized $1$-harmonic Functions on cocompact domains}


In \cite{W2}, we study $p$-harmonic geometry and related topics, and obtain, in particular:

\begin{theorem}\label{T:5.0} $([W2,\, \operatorname{Theorem}\, 9.6])$$\quad$  Let a complete manifold
$M$ have the global doubling property $\r{2.2}$, and $K$ be a
compact subset of $M$. If $g: M\backslash K \to \mathbb{R}$ is a
continuous function with $\inf _{x\in M\backslash K} |g(x)| > 0\,
,$  then there does not exist a $C^2$ solution $f$ of the equation
 $ {\rm div} \bigg(\frac {\nabla f} { |\nabla f|}\bigg)=
g(x)\, $ on $M\backslash K\, .$\end{theorem}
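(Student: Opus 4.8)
The plan is to argue by contradiction, exactly mirroring the structure of Corollary~\ref{C:2.1}: suppose a $C^2$ solution $f$ of $\operatorname{div}\!\bigl(\tfrac{\nabla f}{|\nabla f|}\bigr) = g(x)$ exists on $M\backslash K$, and derive a contradiction from the hypothesis $\inf_{x\in M\backslash K}|g(x)| > 0$. First I would set $w = \inf_{x\in M\backslash K}|g(x)| > 0$ and observe that, since $K$ is compact, for a fixed basepoint $x_0$ all geodesic balls $B(x_0,r)$ with $r$ large enough contain $K$; the relevant region is then the annular domain $B(x_0,r)\backslash K$, on which $g$ is bounded away from zero.

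The core of the argument is the same Cauchy--Schwarz estimate used in Theorem~\ref{T:1.1}. I would insert the cutoff function $\psi \ge 0$ from \cite{W1} Lemma~1 (with $t=r$, $s=\tfrac r2$) as a test function, integrate against the equation, and bound
\[
w\,{\rm Vol}\bigl(B(x_0,\tfrac r2)\backslash K\bigr) \le \Bigl|\int_{B(x_0,r)\backslash K} g\,\psi\,dv\Bigr| = \Bigl|\int_{B(x_0,r)\backslash K}\tfrac{\nabla f}{|\nabla f|}\cdot\nabla\psi\,dv\Bigr| \le \int_{B(x_0,r)} |\nabla\psi|\,dv \le \frac{C_1}{r}{\rm Vol}(B(x_0,r)).
\]
Applying the global doubling property \r{2.2} to replace ${\rm Vol}(B(x_0,r))$ by a multiple of ${\rm Vol}(B(x_0,\tfrac r2))$, and noting that ${\rm Vol}(B(x_0,\tfrac r2)\backslash K)$ and ${\rm Vol}(B(x_0,\tfrac r2))$ differ only by the fixed finite quantity ${\rm Vol}(K)$, I would obtain an inequality of the form $w\,{\rm Vol}(B(x_0,\tfrac r2)) \le \tfrac{C_1 D(M)}{r}{\rm Vol}(B(x_0,\tfrac r2)) + (\text{bounded correction})$. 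Dividing by ${\rm Vol}(B(x_0,\tfrac r2))$ and letting $r\to\infty$ forces $w \le 0$, contradicting $w>0$.

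The main obstacle will be handling the boundary terms and the excised compact set $K$ carefully, since this is where the present statement departs from the clean interior setting of Theorem~\ref{T:1.1}. Specifically, when I integrate the test function against the equation over $B(x_0,r)\backslash K$, the integration by parts (or application of the weak formulation \r{1.2}) produces a contribution from $\partial K$; I must either choose $\psi$ to vanish near $K$ or absorb the $\partial K$ flux term into a fixed constant, using that $\bigl|\tfrac{\nabla f}{|\nabla f|}\bigr|\le 1$ bounds this flux by ${\rm Area}(\partial K)$, which is independent of $r$ and therefore harmless after division by ${\rm Vol}(B(x_0,\tfrac r2))\to\infty$. I also need to confirm that the doubling constant $D(M)$ combines correctly with the two-sided volume estimate so that the ratio $\tfrac{{\rm Vol}(B(x_0,r))}{{\rm Vol}(B(x_0,\tfrac r2)\backslash K)}$ stays bounded as $r\to\infty$; this is where the completeness and noncompactness of $M$ (guaranteeing ${\rm Vol}(B(x_0,\tfrac r2))\to\infty$) are essential, so that the $K$-correction becomes negligible in the limit.
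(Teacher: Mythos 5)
Your architecture is the same as the paper's: the paper establishes this statement (via its generalization, Theorem~\ref{T:5.1}, whose proof ``proceeds as in the proof of [W2, Theorem 9.6]'') by exactly the scheme you outline --- a cutoff $\psi$ from \cite{W1} Lemma~1, Stokes' theorem and Cauchy--Schwarz, the global doubling property to trade ${\rm Vol}(B(x_0,r))$ for ${\rm Vol}(B(x_0,\tfrac r2))$, and an $r$-independent flux term coming from the excised compact region. One structural difference is worth adopting from the paper: since $K$ is merely compact, $\partial K$ need not be rectifiable, so your bound of the flux by ${\rm Area}(\partial K)$ (and the very application of Stokes' theorem across $\partial K$) is not justified; the paper instead excises a fixed geodesic ball $B(x_0,r_0)\supset K$ and picks up the flux through $\partial B(x_0,r_0)$, which is both legitimate and bounded independently of $r$. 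Your alternative fix --- making $\psi$ vanish near $K$ --- would also work, but you should commit to one of these rather than leave the displayed identity over $B(x_0,r)\backslash K$ as stated, since as written it omits the inner boundary term.

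The genuine gap is at the final limit. After dividing by ${\rm Vol}(B(x_0,\tfrac r2))$, you need the fixed correction term to vanish, i.e.\ you need ${\rm Vol}(B(x_0,\tfrac r2))\to\infty$, and you assert this is ``guaranteed'' by completeness and noncompactness of $M$. That implication is false: complete noncompact manifolds can have finite volume (finite-volume cusped hyperbolic manifolds, for instance), in which case your final inequality only gives $w \le (\text{fixed constant})/{\rm Vol}(M)$ --- no contradiction. What is true, and what the argument actually requires, is that the \emph{global doubling property itself} forces infinite volume. The paper isolates this as Lemma~\ref{L:5.1} and proves it by a separate argument: if ${\rm Vol}(M)=V_0<\infty$, choose $n$ with ${\rm Vol}(B(x_0,2^nr))>\tfrac{D(M)}{D(M)+1}V_0$, take $x_1$ at distance $3\cdot 2^n r$ from $x_0$, note $B(x_1,2\cdot 2^nr)\subset M\backslash B(x_0,2^nr)$ while $B(x_0,2^nr)\subset B(x_1,4\cdot 2^nr)$, and apply \eqref{2.2} to get the contradiction $\tfrac{D(M)}{D(M)+1}V_0 < D(M)\,{\rm Vol}(B(x_1,2\cdot 2^nr)) < \tfrac{D(M)}{D(M)+1}V_0$. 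Without this lemma (or a substitute for it) your proof does not close; with it, your argument coincides with the paper's. A smaller point in the same spirit: your first inequality $w\,{\rm Vol}(\cdot)\le \bigl|\int g\psi\,dv\bigr|$ presumes $g$ does not change sign on the region of integration; the paper makes this explicit by a dichotomy (intermediate value theorem versus one-signed $A$) before running the estimate, and you should do the same.
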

\smallskip

These ideas and techniques in studying nonhomogeneous $1$-harmonic functions in [W2, (9.1)] on the complement of a compact set on manifolds can provide a unified method in the following more general setting:

\begin{theorem}\label{T:5.1} Let $M$ and $K$ satisfy the assumptions in Theorem \ref{T:5.0}. Let $f\in W^{1,1}_{loc}(M)$ be a weak solution of
\begin{equation}\label{5.1} \operatorname{div} \bigg(\frac {\nabla f} { |\nabla f|}\bigg)= A(x,
f, \nabla f)\,\quad\quad \operatorname{on}\quad M \backslash
{K}\end{equation} where $A$ is a
continuous real-valued function on $M$, or $A$ is either nonnegative or nonpositive valued, with
\begin{equation}\label{5.2} w_{_{K^c}} := ess \, \inf_{x \in M \backslash K} |A(x,
f, \nabla f)|\end{equation} Then $w_{_{K^c}}=0$.\label{T:5.1}
\end{theorem}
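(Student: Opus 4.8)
The plan is to adapt the doubling-constant argument of Theorem \ref{T:1.1} and the cocompact techniques behind Theorem \ref{T:5.0} to the exterior domain $M \backslash K$. As in Theorem \ref{T:1.1}, I would first dispose of the case where $A(x,f,\nabla f)$ is continuous and changes sign: by the intermediate value theorem $A$ vanishes somewhere on $M \backslash K$, forcing the essential infimum $w_{_{K^c}}$ of $|A|$ to be $0$ immediately. So the substance lies in the case where $A$ is of one sign (nonnegative- or nonpositive-valued).

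In that case I would test the weak formulation \r{1.2} of \r{5.1} against the cutoff function $\psi \ge 0$ from \cite{W1} Lemma 1, with $t=r$ and $s=\frac r2$, but now I must arrange that the support of $\psi$ lies in $M \backslash K$. Since $K$ is compact, fix a basepoint $x_0$ far enough from $K$ (or take $r$ large enough) so that the balls $B(x_0,r)$ involved avoid $K$ and sit inside $M \backslash K$. Then the same chain of inequalities as in Theorem \ref{T:1.1} applies verbatim: substituting $\psi$ into the weak equation, using the definition \r{5.2}, the Cauchy-Schwarz inequality $\left|\frac{\nabla f}{|\nabla f|}\right| \le 1$, monotonicity of the integral over the enlarged ball, and the gradient bound $\int_{B(x_0,r)} |\nabla \psi|\,dv \le \frac{C_1}{r}{\rm Vol}(B(x_0,r))$ from \cite{W1} Lemma 1, yields
\[
0 \le w_{_{K^c}} {\rm Vol}\Big(B\Big(x_0,\tfrac r2\Big)\Big) \le \frac{C_1}{r}{\rm Vol}(B(x_0,r)) \le \frac{C_1 D(M)}{r}{\rm Vol}\Big(B\Big(x_0,\tfrac r2\Big)\Big),
\]
where the last step invokes the global doubling property \r{2.2}. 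Cancelling the common volume factor gives $w_{_{K^c}} \le \frac{C_1 D(M)}{r}$, and letting $r \to \infty$ forces $w_{_{K^c}} = 0$.

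The main obstacle I anticipate is ensuring the test function $\psi$ from \cite{W1} Lemma 1 can be genuinely supported in $M \backslash K$ while still straddling the annulus between radii $\frac r2$ and $r$, so that the volume-comparison cancellation goes through. Because $M$ is complete and noncompact and $K$ is compact, the completeness gives geodesic balls of arbitrarily large radius and the global doubling property \r{2.2} holds at every scale and center; choosing $x_0$ outside a large enough neighborhood of $K$, all the balls $B(x_0,r)$, $B(x_0,\frac r2)$ can be taken disjoint from $K$ for the relevant range of $r$, so the cutoff lives entirely in the exterior domain and the hypothesis $\inf|A|$ over $M\backslash K$ legitimately bounds the integrand. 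Once this geometric placement is secured the estimate is identical to that of Theorem \ref{T:1.1}, and the conclusion $w_{_{K^c}} = 0$ follows exactly as there.
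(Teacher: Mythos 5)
Your overall strategy---run the doubling argument of Theorem \ref{T:1.1} on balls lying entirely in $M\backslash K$---can be made to work, but the step you yourself single out as ``the main obstacle'' is resolved incorrectly as written. You propose to ``fix a basepoint $x_0$ far enough from $K$ (or take $r$ large enough) so that the balls $B(x_0,r)$ involved avoid $K$,'' and then to conclude by letting $r\to\infty$. These two requirements are incompatible: for any \emph{fixed} $x_0$, completeness of $M$ forces $K\subset B(x_0,r)$ as soon as $r>\sup_{y\in K}\operatorname{dist}(x_0,y)$, so the cutoff $\psi$ ceases to be supported in $M\backslash K$ precisely in the limit $r\to\infty$ that you need; the parenthetical ``or take $r$ large enough'' is exactly backwards, since enlarging $r$ makes avoiding $K$ harder, not easier. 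The repair is to let the center move with the radius: for each $r>0$ choose $x_0(r)$ with $\operatorname{dist}(x_0(r),K)>r$. Such points exist because $M$ is unbounded --- which you should justify, either by adding noncompactness as a standing assumption, or, staying within the paper's toolkit, by Lemma \ref{L:5.1} (global doubling forces infinite volume, so $M$ cannot be compact, hence by Hopf--Rinow it is unbounded). Since the global doubling property \eqref{2.2} is uniform over all centers and scales, your chain of inequalities is then valid for each such ball:
\[
0 \le w_{_{K^c}}\,{\rm Vol}\Big(B\Big(x_0(r),\tfrac r2\Big)\Big) \le \frac{C_1}{r}\,{\rm Vol}\big(B(x_0(r),r)\big) \le \frac{C_1 D(M)}{r}\,{\rm Vol}\Big(B\Big(x_0(r),\tfrac r2\Big)\Big),
\]
giving $w_{_{K^c}}\le C_1D(M)/r$ for every $r>0$, whence $w_{_{K^c}}=0$.

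Once corrected, your proof is genuinely different from the paper's, and arguably simpler. The paper keeps the center fixed: it encloses $K\subset B(x_0,r_0)$, takes a cutoff $\equiv 1$ on $\overline{B(x_0,r)}$ vanishing off $B(x_0,2r)$, and integrates the equation over the annulus $B(x_0,2r)\backslash B(x_0,r_0)$; Stokes' theorem then produces, in addition to the gradient term, a \emph{fixed} inner boundary term ${\rm Vol}(\partial B(x_0,r_0))$, and the proof concludes by dividing by ${\rm Vol}(B(x_0,r))$ and invoking Lemma \ref{L:5.1} together with \eqref{2.2} to send both error terms to zero as $r\to\infty$. Your route trades the annulus, the boundary term, and the division-by-volume step for the geometric fact that balls of arbitrarily large radius fit inside $M\backslash K$; in effect it reduces Theorem \ref{T:5.1} to a literal application of Theorem \ref{T:1.1} with $\Omega=M\backslash K$ and $D(\Omega)=D(M)$. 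Both arguments ultimately rest on the same underlying fact (a complete manifold with global doubling cannot be bounded), but the paper reaches it through infinite volume while you need it in the form of unboundedness; neither works without it, so state it explicitly.
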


\begin{lemma}\cite {W3}\label{L:5.1} Every complete manifold  $M$ with the global doubling
property \r{2.2} has infinite volume.
\end{lemma}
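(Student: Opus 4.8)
The plan is to prove the contrapositive and exploit the doubling property together with the key volume growth control already established. Specifically, I want to show that if $M$ has the global doubling property \r{2.2}, then $\mathrm{Vol}(M) = \infty$. The natural strategy is to start from a fixed geodesic ball $B(x_0, r_0)$ of positive volume and iterate the doubling inequality in the \emph{reverse} direction: \r{2.2} tells us that $\mathrm{Vol}(B(x,2r)) \le D(M)\,\mathrm{Vol}(B(x,r))$, but what I actually need is a \emph{lower} bound on the volume of large balls. So first I would rewrite \r{2.2} as
\[
\mathrm{Vol}(B(x_0, r)) \ge \frac{1}{D(M)}\,\mathrm{Vol}(B(x_0, 2r))
\]
which, read correctly, gives that the volume \emph{cannot decay too fast} as one passes from radius $r$ to radius $2r$. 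Equivalently, $\mathrm{Vol}(B(x_0, 2r)) \le D(M)\,\mathrm{Vol}(B(x_0, r))$ bounds how fast the volume grows, so I must instead produce a genuine growth mechanism.

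The cleaner route is to invoke Proposition \ref{P:2.1} directly, since the global doubling property is exactly the hypothesis $\Omega = M$ there. In that proposition we obtained the estimate \r{2.5}, namely $\liminf_{r\to\infty} e^{-wr}\,\mathrm{Vol}(B(x_0,r)) \ge K > 0$ with $K = e^{-wr_0}\,\mathrm{Vol}(B(x_0,r_0))$. However, \r{2.5} involves the analytic quantity $w$, which is tied to a weak solution of \r{1.1} and is not intrinsic to $M$ alone; so for a statement purely about the geometry of $M$ I would rather argue from doubling by a covering or packing argument. The plan is: fix $x_0$ and $r_0$ with $\mathrm{Vol}(B(x_0,r_0)) > 0$ (such $r_0$ exists since $M$ is a complete Riemannian manifold of positive dimension), and for each $k$ compare $\mathrm{Vol}(B(x_0, 2^k r_0))$ with $\mathrm{Vol}(B(x_0, r_0))$. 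Since $M$ is noncompact and complete, the geodesic balls $B(x_0, 2^k r_0)$ exhaust $M$ as $k\to\infty$, so $\mathrm{Vol}(M) = \lim_{k\to\infty}\mathrm{Vol}(B(x_0, 2^k r_0))$.

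The substance of the argument is to show this limit is infinite, and here the doubling inequality is used in the form that a \emph{local reverse doubling} follows from global doubling: on a complete noncompact manifold the boundary spheres are nonempty at every radius, so for the annulus between radii $2^k r_0$ and $2^{k+1} r_0$ one can place a controlled number of disjoint balls of radius $2^{k-1}r_0$ centered on the sphere of radius roughly $3\cdot 2^{k-1} r_0$, each of volume comparable to $\mathrm{Vol}(B(x_0, 2^k r_0))$ up to the doubling constant. Summing the volumes of these disjoint annular contributions yields $\mathrm{Vol}(B(x_0, 2^{k+1} r_0)) \ge (1 + \delta)\,\mathrm{Vol}(B(x_0, 2^k r_0))$ for a fixed $\delta = \delta(D(M)) > 0$, which forces geometric growth and hence $\mathrm{Vol}(M) = \infty$. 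The main obstacle I anticipate is making the annular packing step rigorous: one must verify, using only completeness, noncompactness, and doubling, that the sphere of a given radius is nonempty and that enough disjoint balls fit in the annulus with volumes bounded below via a single application of \r{2.2}. This is the step where the hypotheses interact most delicately, since without noncompactness the exhaustion stops and without doubling the comparison of annular to ball volume fails.
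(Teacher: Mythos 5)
Your approach is correct in substance but takes a genuinely different route from the paper's. The paper argues by contradiction: if $\mathrm{Vol}(M)=V_0<\infty$, the exhaustion $M=\cup_{i}B(x_0,2^ir)$ yields an $n$ with $\mathrm{Vol}(B(x_0,2^nr))>\frac{D(M)}{D(M)+1}V_0$, so the complement of this ball has volume less than $\frac{V_0}{D(M)+1}$; choosing $x_1$ with $d(x_0,x_1)=3\cdot 2^nr$, the triangle inequality gives $B(x_1,2\cdot2^nr)\subset M\setminus B(x_0,2^nr)$ and $B(x_0,2^nr)\subset B(x_1,4\cdot2^nr)$, and a single application of \r{2.2} centered at $x_1$ then forces $\frac{D(M)}{D(M)+1}V_0<\frac{D(M)}{D(M)+1}V_0$, a contradiction. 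You instead run essentially the same geometric mechanism forwards: a ball placed in each annulus gives the reverse-doubling estimate $\mathrm{Vol}(B(x_0,2^{k+1}r_0))\ge(1+\delta)\,\mathrm{Vol}(B(x_0,2^kr_0))$ with $\delta=\delta(D(M))>0$, and iteration forces geometric growth, hence infinite volume. What your route buys is a quantitative conclusion (a power-law lower bound on volume growth), not merely the nonfiniteness of the volume; what the paper's route buys is brevity, since the finite-volume hypothesis lets one far-away ball do all the work in a single step. The step you flag as the main obstacle is routine and needs less than you fear: one ball per annulus suffices, and no packing argument is required. Take $y$ with $d(x_0,y)=3\cdot2^{k-1}r_0$ (such $y$ exists because a complete noncompact manifold is unbounded and $d(x_0,\cdot)$ is continuous, hence onto $[0,\infty)$); the triangle inequality places $B(y,2^{k-1}r_0)$ inside $B(x_0,2^{k+1}r_0)\setminus B(x_0,2^kr_0)$, and since $B(x_0,2^kr_0)\subset B(y,5\cdot2^{k-1}r_0)\subset B(y,2^{k+2}r_0)$, \emph{three} applications of \r{2.2} centered at $y$ (not one, as you wrote) give $\mathrm{Vol}(B(y,2^{k-1}r_0))\ge D(M)^{-3}\mathrm{Vol}(B(x_0,2^kr_0))$, so $\delta=D(M)^{-3}$ works. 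Note finally that both proofs use noncompactness of $M$ (the paper needs a point at distance exactly $3\cdot2^nr$ from $x_0$), even though the lemma's statement does not mention it; this hypothesis is genuinely needed, since compact manifolds satisfy \r{2.2} with some constant yet have finite volume. Your instinct to reject a shortcut through Proposition \ref{P:2.1} was also sound: estimate \r{2.5} concerns the quantity $w$ attached to a weak solution of \r{1.1}, not the geometry of $M$ alone.
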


For completeness, we provide the following:

\begin{proof}[Proof of Lemma \ref{L:5.1}] Suppose on the
contrary  ${\rm Vol}(M) = V_0\, $ for some $0 < V_0 < \infty\, .$
Since $M$ is complete, $M = \cup_{i=0}^{\infty}B(x_0;2^i r)\, ,$
there would exist $n$ such that ${\rm Vol} (B(x_0;2^nr)) > \frac
{D(M)}{D(M)+1} V_0\, .$ Hence, ${\rm Vol} (M \backslash
B(x_0;2^nr)) < \frac{1}{D(M)+1} V_0\, .$ Choose $x_1 \in M$ such
that the distance between $x_0$ and $x_1$ is $3\cdot 2^n r\, ,$
Then $B(x_1; 2 \cdot 2^n r) \subset M \backslash B(x_0;2^nr)$ and
$ B(x_0;2^nr) \subset B(x_1; 4 \cdot 2^n r)\, .$ By the global
doubling property, this would lead the following contradiction:
$$\frac
{D(M)}{D(M)+1} V_0 < {\rm Vol} (B(x_1;4 \cdot 2^nr)) \le D(M) {\rm
Vol} (B(x_1;2 \cdot 2^nr)) < \frac{D(M)}{D(M)+1} V_0\, . $$
\end{proof}

As an immediate consequence of Lemma \ref{L:5.1}, one obtains the following result due to Calabi and Yau by different methods:
\begin{corollary}[\cite {Ca, Y}] Every complete manifold of nonnegative Ricci curvature has infinite volume.
\end{corollary}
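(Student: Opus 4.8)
The plan is to derive the statement as a direct corollary of Lemma~\ref{L:5.1} together with the Bishop--Gromov volume comparison theorem. The final statement asserts that every complete manifold of nonnegative Ricci curvature has infinite volume, and the preceding Lemma~\ref{L:5.1} already gives this conclusion under the hypothesis of the global doubling property~\r{2.2}. So the entire content reduces to verifying that nonnegative Ricci curvature \emph{implies} the global doubling property, after which the result follows at once.

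First I would invoke the Bishop--Gromov inequality: if $M^n$ has $\operatorname{Ric} \ge 0$, then for every fixed $x \in M$ the ratio ${\rm Vol}(B(x,r))/{\rm Vol}(B_0(r))$ is monotone nonincreasing in $r$, where $B_0(r)$ is the ball of radius $r$ in Euclidean $\mathbb{R}^n$. Since Euclidean volume scales as ${\rm Vol}(B_0(2r)) = 2^n {\rm Vol}(B_0(r))$, monotonicity applied to the radii $r$ and $2r$ yields
\begin{equation}\notag
\frac{{\rm Vol}(B(x,2r))}{2^n\,{\rm Vol}(B_0(r))} \le \frac{{\rm Vol}(B(x,r))}{{\rm Vol}(B_0(r))}\,,
\end{equation}
and hence ${\rm Vol}(B(x,2r)) \le 2^n\,{\rm Vol}(B(x,r))$ for all $x \in M$ and all $r > 0$. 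This is precisely the global doubling property~\r{2.2} with $D(M) = 2^n$, a constant independent of both $x$ and $r$.

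With the global doubling property established, Lemma~\ref{L:5.1} applies verbatim and forces ${\rm Vol}(M) = \infty$, which is exactly the assertion. I would present this as a short two-line deduction: nonnegative Ricci curvature gives doubling via Bishop--Gromov, and doubling gives infinite volume via Lemma~\ref{L:5.1}.

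I do not expect any serious obstacle here, since both ingredients are either classical (Bishop--Gromov) or already proved in the excerpt (Lemma~\ref{L:5.1}). The only point demanding a little care is ensuring that the doubling constant extracted from Bishop--Gromov is genuinely \emph{uniform} in the basepoint $x$ and the radius $r$, as~\r{2.2} requires; this is automatic because the comparison constant $2^n$ depends only on the dimension, not on the geometry of $M$. Thus the main work, if any, is purely expository: recalling the Bishop--Gromov statement in the form needed and noting its scale invariance.
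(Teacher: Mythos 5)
Your proof is correct and follows exactly the paper's intended route: the paper presents this corollary as an immediate consequence of Lemma~\ref{L:5.1}, with the implicit step being that nonnegative Ricci curvature yields the global doubling property~\r{2.2} via Bishop--Gromov (a fact the paper invokes explicitly elsewhere, e.g.\ in the proofs of Proposition~\ref{P:3.2} and Corollary~\ref{C:6.1}). Your write-up simply makes that Bishop--Gromov step explicit, with the correct uniform constant $D(M)=2^n$.
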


\begin{proof}[Proof of Theorem \ref{T:5.1}] We consider two cases:

\noindent Case 1. $A(x,
f(x), \nabla f(x))$ is a continuous function on $M\backslash K\, :$ Then either it assumes both positive and negative values, in which case the assertion follows from the
intermediate value theorem, or we go to Case 2.

\noindent Case 2. $A(x,
f(x), \nabla f(x))$ is nonpositive or nonnegative valued: We proceed as in the proof of $[W2,\, \operatorname{Theorem}\, 9.6]\, .$   Since $K\subset M\, $ is compact,
choose a sufficiently large $r_0 < r$ such that $K\subset
B(x_0,r_0)\, .$
 Let $0 \le \psi \le 1$ be the cut-off function
as in \cite {W1} Lemma 1 in which $t=r, s= 2r$ (i.e. $\psi \equiv
1$ on the closure $\overline{B(x_0,r)}\, ,$ $\psi \equiv 0$ off
$B(x_0,2r)\, ,$ and $|\nabla \psi| \le \frac {C_1}{r}\,$ ). Multiplying both sides of \r{5.1} by $\psi\, ,$ integrating over
${B(x_0,2r)}\backslash {B(x_0,r)}\, ,$  and applying Stokes' theorem, we have
\begin{equation}\label{5.2}\begin{aligned} & w_{_{K^c}}({\rm Vol}(B(x_0,
r)) - {\rm Vol}(B(x_0, r_0))) \\ \le&  \bigg|\int _{B(x_0,r)\backslash B(x_0,
r_0)} A(x, f(x), \nabla f(x))\psi(x) dx\bigg| \\\le&  \bigg| \int
_{B(x_0,2r)\backslash B(x_0, r_0)} A(x, f(x), \nabla f(x))\psi(x)
dx\bigg|
\\ =& \bigg| \int _{\partial B(x_0,r_0)}  \frac {\nabla f} { |\nabla
f|} \cdot \nu dS +  \int _{B(x_0,2r)\backslash B(x_0, r_0)}
 \ \ \frac {\nabla f} { |\nabla f|}\cdot \nabla \psi
dx \bigg|
\\\le&    {\rm Vol}(\partial B(x_0, r_0)) +  \frac {C_1 {\rm Vol}(B(x_0,
2r))}r\, ,
\end{aligned}\end{equation} where $\nu$  is  the  unit  normal  to  $\partial B(x_0,r)\, ,$ $dS$ is the area element of $\partial B(x_0,r)\, ,$ and $C_1 >0$ is the constant as above.
Hence, dividing \r{5.2} by ${\rm Vol}(B(x_0, r))\, ,$ and using
\r{5.2} and \r{2.2}, one has
\[ w_{_{K^c}}  (1-\frac {{\rm Vol}(B(x_0,
r_0)}{{\rm Vol}(B(x_0, r))})  \le \frac {{\rm Vol}(\partial B(x_0,
r_0))}{{\rm Vol}(B(x_0, r))} + \frac { C_1 D(M)}{r}\to 0\]
as $r \to \infty\, ,$
since by Lemma \ref{L:5.1}, $M$ has infinite volume.
\end{proof}

\begin{remark}\label{R:5.1}
From the above proof, it is clear that if we weaken the assumption that $K$ is compact in Theorem \ref{T:5.1} to \emph {$K$ being precompact or bounded}, the conclusion in Theorem \ref{T:5.1} remains true. Similarly, the following Proposition \ref{P:5.1} and Corollary \ref{C:1.3} also hold on the complement of any precompact or bounded set $K\, .$
\end{remark}
As an immediate consequence of Theorem \ref{T:5.1},
we have:

\begin{proposition} \label{P:5.1}
Let $K\, ,$ $M\, ,$ and $w\, $ be as in Theorem \ref{T:5.1}. Then there does not
exist a generalized $1$-harmonic function on the complement of a compact set $K$ with $w_{_{K^c}} > 0$.
\end{proposition}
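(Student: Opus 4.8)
The plan is to deduce this directly from Theorem \ref{T:5.1} by a contrapositive argument; indeed, Proposition \ref{P:5.1} is nothing more than a restatement, in nonexistence form, of the conclusion $w_{_{K^c}} = 0$ established there. First I would argue by contradiction: suppose there were a generalized $1$-harmonic function $f$ on $M \backslash K$ with $w_{_{K^c}} > 0$. By the definition of a generalized $1$-harmonic function (a weak solution of \r{1.1}, here \r{5.1}), such an $f$ lies in $W^{1,1}_{loc}(M)$ and solves \r{5.1} weakly, with $A$ continuous real-valued, or nonnegative or nonpositive valued, and with $w_{_{K^c}} = ess\,\inf_{x \in M\backslash K} |A(x, f, \nabla f)|$ as in \r{5.2}.

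Next, since $K$ and $M$ are taken to satisfy the hypotheses of Theorem \ref{T:5.1} — namely $M$ complete with the global doubling property \r{2.2} and $K$ compact — all the assumptions of Theorem \ref{T:5.1} are met by this $f$. Applying Theorem \ref{T:5.1} then forces $w_{_{K^c}} = 0$, which contradicts the assumption $w_{_{K^c}} > 0$. This contradiction establishes the asserted nonexistence.

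I do not expect any substantive obstacle at this stage: the entire analytic content has already been carried out in the proof of Theorem \ref{T:5.1}, namely the cut-off function estimate from \cite{W1} Lemma 1, the application of Stokes' theorem, and crucially Lemma \ref{L:5.1}, which guarantees that the global doubling property forces infinite volume so that ${\rm Vol}(B(x_0,r_0))/{\rm Vol}(B(x_0,r)) \to 0$ as $r \to \infty$. The present proposition merely repackages that conclusion, so the proof reduces to a one-line appeal to the preceding theorem.
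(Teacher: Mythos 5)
Your proposal is correct and coincides with the paper's own proof, which simply cites Theorem \ref{T:5.1}: the proposition is the contrapositive restatement of that theorem's conclusion $w_{_{K^c}}=0$, so the contradiction argument you give is exactly the intended one.
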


\begin{proof}
This follows immediately from Theorem \ref{T:5.1}.
\end{proof}

\begin{corollary}\label{C:1.3} On the complement of any compact subset $K$ in a complete manifold $M$ of nonnegative Ricci curvature, there does not exist a
weak solution of the level set formulation of the inverse mean
curvature flow \r{1.8}, with constant $1$-tension field \r{4.2}.
\end{corollary}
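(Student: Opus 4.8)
The plan is to recognize the inverse mean curvature flow as the special case of \r{5.1} carrying the nonnegative-valued datum $A(x,f,\nabla f)=|\nabla f|$, and then to feed it into Theorem \ref{T:5.1}. The only hypothesis of that theorem not stated verbatim in the corollary is the global doubling property of $M$, so the first thing I would do is produce it from the curvature assumption: a complete manifold $M$ with $\operatorname{Ric}^M \ge 0$ satisfies \r{2.2} (indeed with $D(M)=2^n$) by the Bishop--Gromov volume comparison theorem, exactly as already invoked in the proof of Proposition \ref{P:3.2}. Hence $M$ and the compact set $K$ meet the hypotheses of Theorem \ref{T:5.1}, and the machinery of Section 5 is available.

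Next I would suppose, for contradiction, that $f\in W^{1,1}_{loc}(M)$ is a weak solution on $M\backslash K$ of the level set formulation \r{1.8} that in addition has constant $1$-tension field \r{4.2}. Since \r{1.8} is precisely \r{5.1} with the nonnegative choice $A(x,f,\nabla f)=|\nabla f|$, Theorem \ref{T:5.1} applies directly and gives
\[
w_{_{K^c}} = ess \, \inf_{x \in M \backslash K} |\nabla f|(x) = 0 .
\]
To exploit the extra hypothesis \r{4.2}, I would test the two distributional identities \r{1.8} and \r{4.2} against an arbitrary $\psi\in C_0^\infty(M\backslash K)$ and subtract, obtaining $\int_{M\backslash K}\bigl(|\nabla f|-c\bigr)\psi\,dv = 0$ for all such $\psi$, whence $|\nabla f| = c$ almost everywhere on $M\backslash K$. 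In particular $w_{_{K^c}} = |c|$.

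Combining the two conclusions forces $c=0$, so $|\nabla f|=0$ a.e. and $f$ is (locally) constant; but a constant function is not admissible as a weak solution of \r{1.1} in the sense of \r{1.2}, which is the desired contradiction. The curvature-to-doubling reduction and the citation of Theorem \ref{T:5.1} are routine, so I expect the only genuinely delicate point to be this last step: reconciling the two distributional equations into the pointwise identity $|\nabla f|=c$ and then ruling out the borderline constant solution. (Since $K$ is compact one does not even need Remark \ref{R:5.1}, though the same argument would survive under the weaker precompactness hypothesis recorded there.)
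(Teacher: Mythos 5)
Your proof is correct and takes essentially the same route as the paper's: both reduce the corollary to Theorem \ref{T:5.1} (with the global doubling property of $M$ supplied by Bishop--Gromov from $\operatorname{Ric}\ge 0$, as the paper records in the proof of Corollary \ref{C:6.1}), deduce the chain $w_{_{K^c}}=0=c=|\nabla f|$ from \r{1.8} and \r{4.2}, and conclude by the remark in the introduction that constants are not admissible weak solutions of \r{1.1}. Your write-up simply makes explicit the steps the paper compresses into a single line, in particular the subtraction of the two distributional identities to obtain $|\nabla f|=c$ almost everywhere.
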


\begin{proof}
Suppose, on the contrary, that there were such a weak solution. Then by Theorem \ref{T:5.1}, \r{1.8} and \r{4.2}, we would have $w_{_{K^c}}=0=c=|\nabla f|\, ,$ i.e., $f= constant\, .$ This contradicts
our remark in the introduction that a constant is not a weak solution of \r{1.1}.
\end{proof}

\section{Nonexistence of solutions of the Initial value problem for Inverse Mean Curvature Flow}
In the section, we begin by combining the definition of a weak solution $\big ($in the distribution sense; cf. \r{1.2}$\big )$
with an initial condition consisting of a bounded open set $E_0$ with a boundary at least $C^1\, .$
We say that a function $f$ is  {\it a weak solution
of the level set formulation of
the inverse mean curvature flow
\begin{equation}\label{6.1} \operatorname{div} \bigg(\frac {\nabla f}
{ |\nabla f|}\bigg)= | \nabla f |\, \end{equation}
with initial condition $E_0\, $ if $$f \in W^{1,1}_{loc}(M)\, , E_0 =\{x : f(x)<0\}\, , \operatorname{and}\, f\,  \operatorname{is}\,  \operatorname{a}\,  \operatorname{weak}\,  \operatorname{solution}\,  \operatorname{of}\,  (6.1)\,  \operatorname{in} M\backslash E_0\, .$$

All the local estimates \r{1.4}, \r{1.6}, and \r{1.9} (cf. Theorems \ref{T:1.1} and  \ref{T:1.2}, and Corollay \ref{C:1.1}) hold for $\Omega = M\backslash E_0\, .$  On the other hand, we have the following global result for the initial value problem for inverse mean curvature flow, which is in contrast to Proposition \ref{P:3.1} in which $\Omega = M$.

\begin{theorem}\label{T:6.1} Let $M$ be a complete manifold with the global doubling
property. Let $f$ be a weak solution
of the level set formulation of
the inverse mean curvature flow
 \r{6.1} with initial condition $E_0\, .$  Then $$ ess \, \inf_{x
\in M\backslash E_0} |\nabla f|(x) = 0\, .$$
\end{theorem}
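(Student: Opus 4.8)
The plan is to read Theorem \ref{T:6.1} as the specialization of Theorem \ref{T:5.1} to the inverse mean curvature flow. The key observation is that the right-hand side of \r{6.1}, namely $A(x,f,\nabla f)=|\nabla f|\, ,$ is nonnegative-valued, so it meets the sign hypothesis of Theorem \ref{T:5.1}; moreover, by the definition of a weak solution with initial condition $E_0\, ,$ the function $f\in W^{1,1}_{loc}(M)$ is a weak solution of \r{6.1} on $M\backslash E_0\, ,$ with $E_0$ bounded and open.

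First I would take $K=E_0\, ,$ which is bounded, and invoke Remark \ref{R:5.1}, which guarantees that the conclusion of Theorem \ref{T:5.1} persists when the compactness of $K$ is relaxed to boundedness. With $A(x,f,\nabla f)=|\nabla f|\, ,$ the quantity $w_{_{K^c}}$ of \r{5.2} becomes exactly $ess \, \inf_{x\in M\backslash E_0}|\nabla f|(x)\, ,$ and Theorem \ref{T:5.1} then delivers $w_{_{K^c}}=0\, ,$ which is precisely the assertion. Equivalently, since $M$ is complete one may take the compact set $K=\overline{E_0}$ via the Hopf--Rinow theorem and apply Theorem \ref{T:5.1} verbatim; then, because $\partial E_0$ is a $C^1$ hypersurface of measure zero, the essential infimum over $M\backslash E_0$ agrees with that over $M\backslash \overline{E_0}\, ,$ so the two routes give the same conclusion.

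I do not expect a genuine obstacle, as the statement is essentially a corollary. The only points meriting care are the verification that all hypotheses of Theorem \ref{T:5.1} are in force---completeness, the global doubling property, and the sign condition on $A$---and the recollection that, in the argument underlying Theorem \ref{T:5.1}, it is the infinite volume of $M$ forced by global doubling (Lemma \ref{L:5.1}) that sends the right-hand side of the governing estimate to zero as $r\to\infty\, ,$ forcing $w_{_{K^c}}=0\, .$ The conceptual content worth stressing is the contrast with Proposition \ref{P:3.1}: passing from all of $M$ to the complement $M\backslash E_0$ of the initial region upgrades a limiting statement into the clean global vanishing $ess \, \inf_{x\in M\backslash E_0}|\nabla f|(x)=0\, .$
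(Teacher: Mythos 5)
Your proposal is correct and follows exactly the paper's own route: the authors prove Theorem \ref{T:6.1} by citing Theorem \ref{T:5.1} together with Remark \ref{R:5.1} (allowing the bounded set $K=E_0$ in place of a compact one), with $A(x,f,\nabla f)=|\nabla f|$ so that $w_{_{K^c}}$ is precisely $ess\,\inf_{x\in M\backslash E_0}|\nabla f|(x)\,.$ Your additional variant via $K=\overline{E_0}$ and the measure-zero boundary is a harmless elaboration of the same argument.
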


or equivalently,

\begin{theorem}\label{T:6.2}$[$Nonexistence of Weak Solutions$]$ Let $M$ be a complete manifold with the global doubling
property. If $$ ess \, \inf_{x
\in M\backslash E_0} |\nabla f|(x) >0\, ,$$ then there does not exist a weak solution of
the level set formulation of
the inverse mean curvature flow \r{6.1} with initial condition $E_0\, .$
\end{theorem}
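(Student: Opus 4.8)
The plan is to prove Theorem~\ref{T:6.2} by recognizing it as the contrapositive of Theorem~\ref{T:6.1}, so the real content lies in establishing Theorem~\ref{T:6.1}. First I would observe that the initial value problem is, by definition, a weak solution of \r{6.1} on the domain $\Omega = M\backslash E_0$, where $E_0$ is a bounded open set with at least $C^1$ boundary. Since $E_0$ is bounded, it is contained in some geodesic ball $B(x_0,r_0)$, so $K := \overline{E_0}$ is a compact (hence precompact) set, and $\Omega = M\backslash E_0$ differs from $M\backslash K$ only on the boundary $\partial E_0$, which has measure zero and thus does not affect essential infima or integrals.

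The key step is then to invoke Theorem~\ref{T:5.1} with the specific choice $A(x,f,\nabla f) = |\nabla f|$, which is nonnegative-valued and therefore falls squarely under the hypotheses of that theorem. Because $M$ has the global doubling property \r{2.2}, all the assumptions of Theorem~\ref{T:5.1} are met: $K = \overline{E_0}$ is compact (or precompact, which by Remark~\ref{R:5.1} is all we need), and \r{6.1} is precisely \r{5.1} with this choice of $A$. The conclusion $w_{_{K^c}} = 0$ of Theorem~\ref{T:5.1} reads
\[
\operatorname*{ess\,inf}_{x \in M\backslash K} |\nabla f|(x) = \operatorname*{ess\,inf}_{x \in M\backslash E_0} |\nabla f|(x) = 0,
\]
which is exactly the assertion of Theorem~\ref{T:6.1}. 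Theorem~\ref{T:6.2} follows at once: if the essential infimum of $|\nabla f|$ over $M\backslash E_0$ were strictly positive, it would contradict Theorem~\ref{T:6.1}, so no such weak solution can exist.

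The main obstacle I anticipate is the bookkeeping needed to legitimately reduce the given initial value formulation (a solution on $M\backslash E_0$ with $E_0 = \{f<0\}$ and $\partial E_0$ only $C^1$) to the clean hypotheses of Theorem~\ref{T:5.1} (a solution on the complement of a compact set). I would need to confirm that replacing the open set $E_0$ by its closure changes neither the validity of the weak formulation \r{1.2} on the interior nor the relevant volume and boundary integrals appearing in the proof of Theorem~\ref{T:5.1}; the boundary regularity assumption on $\partial E_0$ and Remark~\ref{R:5.1}'s relaxation to precompact $K$ are exactly what make this transition rigorous. Once that identification is secured, the proof is essentially a one-line citation, so I would keep the argument short and let Theorem~\ref{T:5.1} and Lemma~\ref{L:5.1} (guaranteeing $\operatorname{Vol}(M)=\infty$, which drives the limit $r\to\infty$) do all the analytic work.
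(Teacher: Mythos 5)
Your proposal is correct and follows the paper's own route exactly: the paper proves Theorems \ref{T:6.1} and \ref{T:6.2} in one line by citing Theorem \ref{T:5.1} (with $A = |\nabla f| \ge 0$) together with Remark \ref{R:5.1}, which relaxes compactness of $K$ to precompactness/boundedness so that $K = \overline{E_0}$ (or $E_0$ itself) qualifies. Your additional bookkeeping---that $\partial E_0$ has measure zero so the essential infima over $M\backslash E_0$ and $M\backslash \overline{E_0}$ agree---is a sound elaboration of what the paper leaves implicit in its ``follows immediately.''
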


\begin{proof}[Proof of Theorems \ref{T:6.1} and \ref{T:6.2}] This follows immediately from
Theorem \ref{T:5.1} and Remark \ref{R:5.1}. \end{proof}

\begin{corollary}\label{C:6.1} There does not exist a
weak solution of \r{6.1} with initial condition $E_0\, ,$ and with constant $1$-tension field on a
complete manifold $M$ of nonnegative Ricci curvature.
\end{corollary}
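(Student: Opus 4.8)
The plan is to argue by contradiction, in direct parallel with the proof of Corollary \ref{C:1.3}. The first observation I would make is that a complete manifold $M$ of nonnegative Ricci curvature automatically possesses the global doubling property \r{2.2}: by the Bishop--Gromov volume comparison theorem one has ${\rm Vol}(B(x,2r)) \le 2^n {\rm Vol}(B(x,r))$ for all $x$ and all $r > 0$, so \r{2.2} holds with $D(M) = 2^n$. Consequently the hypotheses of Theorem \ref{T:6.1} are met.

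Next, suppose for contradiction that there exists a weak solution $f$ of \r{6.1} with initial condition $E_0$ whose $1$-tension field is a constant $c$ in the sense of \r{4.2}. Comparing the right-hand sides of \r{6.1} and \r{4.2}, both of which equal $\operatorname{div}(\nabla f/|\nabla f|)$, immediately forces $|\nabla f| = c$ almost everywhere on $M \backslash E_0\, ;$ that is, the gradient norm is the single constant value $c$.

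I would then apply Theorem \ref{T:6.1}, which is valid by the first step, to conclude that $ess \, \inf_{x \in M\backslash E_0} |\nabla f|(x) = 0$. Since $|\nabla f|$ agrees with the constant $c$ almost everywhere, its essential infimum is exactly $c$, whence $c = 0$ and $|\nabla f| = 0$. This says $f$ is constant on $M \backslash E_0$, contradicting the remark in the Introduction that constant functions are not admissible as weak solutions of \r{1.1} under our definition. This contradiction establishes the asserted nonexistence.

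I expect no genuine obstacle here, since the substance of the argument is already packaged in Theorem \ref{T:6.1}. The only steps requiring attention are the deduction of the global doubling property from nonnegative Ricci curvature via Bishop--Gromov, and the elementary but essential observation that simultaneously imposing \r{6.1} and \r{4.2} pins $|\nabla f|$ to the constant $c$, so that the vanishing of its essential infimum collapses $f$ to a constant and triggers the contradiction with the definition of a weak solution.
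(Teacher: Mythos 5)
Your proposal is correct and follows essentially the same route as the paper: the paper's one-line proof cites Proposition \ref{P:5.1}, Remark \ref{R:5.1}, and the Bishop--Gromov doubling property of nonnegative Ricci curvature, while you invoke Theorem \ref{T:6.1}, but both rest on exactly the same machinery (Theorem \ref{T:5.1} applied on the complement of the bounded set $E_0$) and the same final step, namely that combining \r{6.1} with \r{4.2} forces $|\nabla f| = c$ a.e., the vanishing essential infimum forces $c=0$, and a constant $f$ is inadmissible as a weak solution --- precisely the argument the paper spells out for Corollary \ref{C:1.3}.
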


\begin{proof}[Proof of Corollary \ref{C:6.1}]
This follows at once from Proposition \ref{P:5.1}, Remark \ref{R:5.1}, and the fact
that a manifold of nonnegative Ricci curvature has the global
doubling property.
\end{proof}

\end{document}